\newtheoremstyle{mio}%
	{}{} 
	{\itshape}{} 
	{\bfseries}{.}{ } 
	{#1 #2\thmnote{\mdseries~(\scshape #3)}} 
\theoremstyle{mio}
\newtheorem{teor}{Theorem}[section]
\newtheorem{cor}[teor]{Corollary}
\newtheorem{prop}[teor]{Proposition}
\newtheorem{lemma}[teor]{Lemma}
\newtheorem{defin}[teor]{Definition}
\newtheoremstyle{definition2}%
	{}{} 
	{}{} 
	{\bfseries}{.}{ } 
	{#1 #2\thmnote{\mdseries~ #3}} 
\theoremstyle{definition2}
\newtheorem{ex}[teor]{Example}
\newtheorem{oss}[teor]{Remark}
\newcommand{\insfracid}{\mathcal{F}}
\newcommand{\inssubmodqr}{\mathbf{F}}
\newcommand{\inssubmod}{\mathrm{SMod}}
\newcommand{\inssubflat}{\mathrm{SModFlat}}
\newcommand{\inssemistar}{\mathrm{SStar}}
\newcommand{\inssemistartf}{\inssemistar_f}
\newcommand{\inssemispectral}{\inssemistar_{sp}}
\newcommand{\inssemispectraltf}{\inssemistar_{f,sp}}
\DeclareMathOperator{\Spec}{Spec}
\DeclareMathOperator{\rad}{rad}
\newcommand{\ins}[1]{\mathbb{#1}}
\newcommand{\insC}{\ins{C}}
\newcommand{\Zar}{\mathrm{Zar}}
\newcommand{\Over}{\mathrm{Over}}
\newcommand{\inslocali}{\mathrm{LocOver}}
\newcommand{\Localiz}{\mathrm{Loc}}
\newcommand{\Overqr}{\mathrm{Over_{qr}}}
\newcommand{\Overflat}{\mathrm{Over_{flat}}}
\newcommand{\Oversloc}{\mathrm{Over_{sloc}}}
\newcommand{\spectral}[1]{\widetilde{#1}}
\newcommand{\qspec}[1]{\mathrm{QSpec}^{#1}}
\newcommand{\B}{\mathcal{B}}
\newcommand{\D}{\mathcal{D}}
\newcommand{\V}{\mathcal{V}}
\newcommand{\xcal}{{\boldsymbol{\mathcal{X}}}}
\newcommand{\scal}{{\boldsymbol{\mathcal{S}}}}
\newcommand{\ucal}{{\boldsymbol{\mathcal{U}}}}
\newcommand{\inverse}{{\mathrm{inv}}}
\newcommand{\cons}{\mathrm{cons}}
\title[Topological properties of localizations]{Topological properties of localizations, flat overrings and sublocalizations}
\author{Dario Spirito}
\address{Dipartimento di Matematica e Fisica, Universit\`a degli Studi
``Roma Tre'', Roma, Italy}
\email{spirito@mat.uniroma3.it}
\date{\today}
\subjclass[2010]{13A15, 13B30, 13B40, 13C11, 13G05}
\keywords{Localizations; flat overrings; spectral spaces; constructible topology}
\thanks{This work was partially supported by {\sl GNSAGA} of {\sl Istituto Nazionale di Alta Matematica}.}
\begin{document}
\begin{abstract}
We study the set of localizations of an integral domain from a topological point of view, showing that it is always a spectral space and characterizing when it is a proconstructible subspace of the space of all overrings. We then study the same problems in the case of quotient rings, flat overrings and sublocalizations.
\end{abstract}

\maketitle

\section{Introduction}
The Zariski topology on the set $\Over(D)$ of overrings of an integral domain was introduced as a natural generalization of the Zariski topology on the space $\Zar(D)$ of valuation overrings of $D$ (called the \emph{Zariski space} of $D$), which in turn was introduced by Zariski in order to tackle the problem of resolution of singularities \cite{zariski_sing,zariski_comp}.

It has been proved that $\Over(D)$, like $\Zar(D)$, is a \emph{spectral space}, meaning that it is homeomorphic to the prime spectrum of a ring \cite[Proposition 3.5]{finocchiaro-ultrafiltri}. There are other subspaces of $\Over(D)$ that are always spectral: for example, this happens for the space of integrally closed overrings \cite[Proposition 3.6]{finocchiaro-ultrafiltri} and the space of local overrings \cite[Corollary 2.14]{surveygraz}.

In the last two cases, the role of $D$ in the definition of the space is merely to provide a setting ($\Over(D)$): that is, for an overring, being integrally closed or local (or a valuation domain, for the case of $\Zar(D)$) is a property completely independent from $D$. Indeed, with very similar proofs it is possible to generalize these results to the case of the spaces of rings comprised between two fixed rings (see e.g. \cite[Propositions 3.5 and 3.6]{finocchiaro-ultrafiltri} and \cite[Example 2.13]{surveygraz}), as well as using these methods to study spaces of modules \cite[Example 2.2]{olberding_topasp}. 

In this paper, we study four subspaces of $\Over(D)$ that are much more closely related to $D$; more precisely, such that, given an overring $T$, the belonging of $T$ to the space depends not on the properties of $T$ but rather on the relation between $D$ and $T$. In Section \ref{sect:loc} we shall start from the space of  localizations (at prime ideals); then we will consider the space of quotient rings (Section \ref{sect:qr}), sublocalizations of $D$ (i.e., intersection of localizations of $D$; Section \ref{sect:sloc}) and flat overrings (Section \ref{sect:flat}).

In each case, we will study two questions: under which conditions they are spectral spaces and under which condition they are closed in the constructible topology of $\Over(D)$. We shall answer completely these questions in the case of localizations (Theorem \ref{teor:Loc}) and quotient rings (Corollary \ref{cor:qr-spec} and Theorem \ref{teor:Overqr}); for sublocalizations we will find a sufficient condition (Theorem \ref{teor:radcolon-sloc}), while for flat overrings we will prove a characterization that is, however, very difficult to use (Proposition \ref{prop:flat-cons}). We shall also study the space of flat submodules of an $R$-module (for rings $R$ that are not necessarily integral domains) and the possibility of representing the space of sublocalizations of $D$ in a more topological way.

\section{Preliminaries}
\subsection{Spectral spaces}
A \emph{spectral space} is a topological space homeomorphic to the prime spectrum of a (commutative, unitary) ring (endowed with the Zariski topology). Spectral spaces can be characterized topologically as those spaces that are $T_0$ (i.e., such that for every pair of points at least one of them is contained in an open set not containing the other), compact, with a basis of open and compact subsets closed by finite intersections, and such that every nonempty irreducible closed subset has a generic point (i.e., it is the closure of a single point) \cite[Proposition 4]{hochster_spectral}.

If $X$ is a spectral space, the \emph{constructible topology} (or \emph{patch topology}) on $X$ (which we denote by $X^\cons$) is the coarsest topology such that the open and compact subspaces of the original topology are both open and closed. The space $X^\cons$ is always a spectral space, that is moreover Hausdorff and totally disconnected \cite[Theorem 1]{hochster_spectral}.

A subset $Y\subseteq X$ is said to be \emph{proconstructible} if it is closed, with respect to the constructible topology; in this case, the constructible topology on $Y$ coincides with the topology induced by the constructible topology on $X$, and $Y$ (with the original topology) is a spectral space (this follows from \cite[1.9.5(vi-vii)]{EGA4-1}). The converse does not hold, i.e., a subspace $Y$ of a spectral space $X$ may be spectral but not proconstructible; however, the following result holds.
\begin{lemma}\label{lemma:YX-cons}
Let $Y\subseteq X$ be spectral spaces. Suppose that there is a subbasis $\mathcal{B}$ of $X$ such that, for every $B\in\mathcal{B}$, both $B$ and $B\cap Y$ are compact. Then, $Y$ is a proconstructible subset of $X$.
\end{lemma}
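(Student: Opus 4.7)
The strategy is to show that the inclusion $\iota \colon Y \hookrightarrow X$ is a spectral map, that is, $\iota^{-1}(U) = U\cap Y$ is quasi-compact in $Y$ for every quasi-compact open $U\subseteq X$. Once this is established, $\iota$ becomes continuous with respect to the constructible topologies, so $\iota(Y^\cons)=Y$ is the image of a compact space inside the Hausdorff space $X^\cons$, hence closed. By definition this means $Y$ is proconstructible in $X$.

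To verify the spectral-map condition, I will first upgrade the hypothesis from the subbasis $\B$ to a basis. Because $X$ is spectral, its quasi-compact opens are closed under finite intersection, so any finite intersection $B_1\cap\cdots\cap B_n$ with $B_i\in\B$ is a quasi-compact open of $X$; such sets form a basis of quasi-compact opens. Moreover, giving $Y$ the subspace topology, each $B_i\cap Y$ is open in $Y$, and by hypothesis it is quasi-compact. Since $Y$ is also spectral, finite intersections $(B_1\cap Y)\cap\cdots\cap(B_n\cap Y) = (B_1\cap\cdots\cap B_n)\cap Y$ are again quasi-compact open in $Y$.

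Now take an arbitrary quasi-compact open $U\subseteq X$. Writing $U$ as a union of basic sets and extracting a finite subcover by quasi-compactness, we get $U = C_1\cup\cdots\cup C_m$, where each $C_j$ is a finite intersection of elements of $\B$. Then
\[
U\cap Y = \bigcup_{j=1}^{m}(C_j\cap Y),
\]
which is a finite union of quasi-compact open subsets of $Y$, hence itself quasi-compact open in $Y$. This proves $\iota$ is spectral, and the compact-into-Hausdorff argument recalled above concludes.

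The only real content is the transfer from the subbasis $\B$ to all quasi-compact opens of $X$; the potential trap is that the intersection of two elements of $\B$ need not lie in $\B$, so one has to invoke the closure of quasi-compact opens under finite intersections in both spectral spaces $X$ and $Y$. Everything else is formal: the spectrality of $Y^\cons$ (which gives compactness), the Hausdorffness of $X^\cons$, and the standard equivalence between being spectral and being constructibly continuous.
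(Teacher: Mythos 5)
Your proof is correct and follows essentially the same route as the paper: show the inclusion $Y\hookrightarrow X$ is a spectral map, then conclude proconstructibility. The paper simply cites \cite[1.9.5(vii)]{EGA4-1} for the final step, whereas you unwind the standard compact-into-Hausdorff argument; the content is the same.
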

\begin{proof}
The hypothesis on $\mathcal{B}$ implies that the inclusion map $Y\hookrightarrow X$ is a spectral map; by \cite[1.9.5(vii)]{EGA4-1}, it follows that $Y$ is a proconstructible subset of $X$.
\end{proof}

For further results about the constructible topology and the relation between ultrafilters and the constructible topology, see \cite{fontana_patch,fifolo_transactions,finocchiaro-ultrafiltri,surveygraz}.

\subsection{The space $\xcal(X)$}
Let $X$ be a spectral space. The \emph{inverse topology} on $X$ is the space $X^\inverse$ having, as a basis of closed sets, the open and compact subspaces of $X$; equivalently, it is the topology having as closed sets the subsets of $X$ that are compact and closed by generizations. The space $X^\inverse$ is again a spectral space. Following \cite{Xx}, we denote by $\xcal(X)$ the space of nonempty subsets of $X$ that are closed in the inverse topology; this space can be endowed with a topology having, as a basis of open sets, the sets of the form
\begin{equation*}
\ucal(\Omega):=\{Y\in\xcal(X)\mid Y\subseteq\Omega\},
\end{equation*}
as $\Omega$ ranges among the open and compact subspaces of $X$. Under this topology, $\xcal(X)$ is again a spectral space \cite[Theorem 3.2(1)]{Xx}.

If $X=\Spec(R)$ for some ring $R$, we set $\xcal(R):=\xcal(\Spec(R))$.

\subsection{Semistar operations}
Let $D$ be an integral domain with quotient field $K$, and let $\inssubmodqr(D)$ be the set of $D$-submodules of $K$. A \emph{semistar operation} on $D$ is a map $\star:\inssubmodqr(D)\longrightarrow\inssubmodqr(D)$ such that, for every $I,J\in\inssubmodqr(D)$ and every $x\in K$,
\begin{enumerate}
\item $I\subseteq I^\star$;
\item $(I^\star)^\star=I^\star$;
\item if $I\subseteq J$ then $I^\star\subseteq J^\star$;
\item $x\cdot I^\star=(xI)^\star$.
\end{enumerate}

A semistar operation is called \emph{spectral} if it is in the form $s_\Delta$ for some $\Delta\subseteq\Spec(D)$, where
\begin{equation*}
I^{s_\Delta}:=\bigcap\{ID_P\mid P\in\Delta\}
\end{equation*}
for every $I\in\inssubmodqr(D)$. If $\star$ is spectral, then $(I\cap J)^\star=I^\star\cap J^\star$ for every $I,J\in\inssubmodqr(D)$.

Starting from any semistar operations $\star$, we can define two maps $\star_f$ and $\spectral{\star}$ by putting, for every $I\in\inssubmodqr(D)$,
\begin{equation*}
I^{\star_f}=\bigcup\{J^\star\mid J\subseteq I,J\text{~is finitely generated}\} \end{equation*}
and
\begin{equation*}
I^{\spectral{\star}}:=\bigcup\{(I:E)\mid 1\in E^\star,E\text{~is finitely generated}\}.
\end{equation*}
Both $\star_f$ and $\spectral{\star}$ are semistar operations, and we always have $(\star_f)_f=\star_f$ and $\spectral{\spectral{\star}}=\spectral{\star}$. If $\star=\star_f$ then $\star$ is said to be \emph{of finite type}; on the other hand, $\star=\spectral{\star}$ if and only if $\star$ is spectral and of finite type.

If $\star=s_\Delta$ is a spectral operation, then $\star$ is of finite type if and only if $\Delta$ is compact \cite[Corollary 4.4]{topological-cons}.

The space $\inssemistar(D)$ of semistar operations on $D$ can be endowed with a topology having, as a basis of open sets, the sets of the form
\begin{equation*}
V_I:=\{\star\in\inssemistar(D)\mid 1\in I^\star\},
\end{equation*}
as $I$ ranges in $\inssubmodqr(D)$. In the induced topology, both the space $\inssemistartf(D)$ of finite-type operations and the space $\inssemispectraltf(D)$ of finite-type spectral operations are spectral (see \cite[Theorem 2.13]{topological-cons} for the former and \cite[Theorem 4.6]{spettrali-eab} for the latter). Moreover, $\inssemispectraltf(D)$ is homeomorphic to $\xcal(D)$ \cite[Proposition 5.2]{Xx}.

\subsection{The $t$-operation}
Let $D$ be an integral domain with quotient field $K$, and let $\star$ be a semistar operation on $D$. If $D^\star=D$, then the restriction of $\star$ to the set $\insfracid(D)$ of fractional ideals of $D$ is said to be a \emph{star operation} on $D$. A classical example of a star operation is the \emph{divisorial closure} (or $v$-operation), which is defined by $I^v:=(D:(D:I))$, where $(I:J):=\{x\in K\mid xJ\subseteq I\}$; the divisorial closure is the biggest star operation on $D$, in the sense that $I^\star\subseteq I^v$ for every star operation $\star$ and every $I\in\insfracid(D)$.

The \emph{$t$-operation} is the finite-type operation associated to the $v$-operation; that is, $t:=v_f$. The $t$-operation is the biggest finite-type star operation. The \emph{$w$-operation}, defined by $w:=\spectral{t}=\spectral{v}$, is the biggest spectral star operation of finite type.

If $\star$ is a star operation on $D$, a prime ideal $P$ of $D$ such that $P=P^\star$ is said to be a \emph{$\star$-prime}; the set of all $\star$-primes is called the \emph{$\star$-spectrum} and is denoted by $\qspec{\star}(D)$. If $\star=s_\Delta$ is a spectral star operation, then $\qspec{\star}(D)=\Delta^\downarrow=\{Q\in\Spec(D)\mid Q\subseteq P\text{~for some~}P\in\Delta\}$.

We always have $D=\bigcap\{D_P\mid P\in\qspec{t}(D)\}$.

See \cite[Chapter 32]{gilmer} for more properties of star operations.

\subsection{Overrings}
Let $D$ be an integral domain with quotient field $K$. An \emph{overring} of $D$ is a ring comprised between $D$ and $K$. The space $\Over(D)$ of the overrings of $D$ can be endowed with a topology having, as a basis of open sets, the sets of the form
\begin{equation*}
\B(x_1,\ldots,x_n):=\{T\in\Over(D)\mid x_1,\ldots,x_n\in T\}=\Over(D[x_1,\ldots,x_n]),
\end{equation*}
as $x_1,\ldots,x_n$ range in $K$. Under this topology, $\Over(D)$ is a spectral space \cite[Proposition 3.5]{finocchiaro-ultrafiltri}.

\section{Localizations}\label{sect:loc}
The first space we analyze is the space of localizations of an integral domain $D$ at its primes ideals, which we denote by $\Localiz(D)$; that is,
\begin{equation*}
\Localiz(D):=\{D_P\mid P\in\Spec(D)\}.
\end{equation*}

\begin{defin}
Let $D$ be an integral domain. We say that $D$ is \emph{rad-colon coherent} if, for every $x\in K\setminus D$, there is a finitely generated ideal $I$ such that $\rad(I)=\rad((D:_D x))$, i.e., if and only if $\D((D:_D x))$ is compact in $\Spec(D)$ for every $x\in K$.
\end{defin}

Obvious examples of rad-colon coherent domains are Noetherian domains or, more generally, domains with Noetherian spectrum. Another large class of such domains is the class of \emph{coherent domains}, i.e., domains where the intersection of two finitely generated ideals is still finitely generated; this follows from the fact that $(D:_Dx)=D\cap x^{-1}D$. In particular, this class contains all Pr\"ufer domains \cite[Proposition 25.4(1)]{gilmer}, or more generally the polynomial rings in finitely many variables over Pr\"ufer domains \cite[Corollary 7.3.4]{glaz-coherent}. See the following Example \ref{ex:Loc-nonconstr} for a domain that is not rad-colon coherent.

\begin{teor}\label{teor:Loc}
Let $D$ be an integral domain.
\begin{enumerate}[(a)]
\item\label{teor:Loc:sp} $\Localiz(D)$ is a spectral space.
\item\label{teor:Loc:proc} $\Localiz(D)$ is proconstructible in $\Over(D)$ if and only if $D$ is rad-colon coherent.
\end{enumerate}
\end{teor}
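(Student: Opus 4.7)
For part (a), I would topologically identify $\Localiz(D)$ with $\Spec(D)$ via the map
\[
\phi\colon \Spec(D)\longrightarrow\Localiz(D),\qquad P\longmapsto D_P,
\]
which is surjective by definition of $\Localiz(D)$ and injective because $D_P$ determines its maximal ideal $PD_P$, whose contraction to $D$ is $P$. To see that $\phi$ is a homeomorphism (with $\Localiz(D)$ carrying the topology induced from $\Over(D)$), note that the subbasic open $\B(x)\cap\Localiz(D)=\{D_P:x\in D_P\}$ pulls back via $\phi^{-1}$ to $\{P:(D:_D x)\not\subseteq P\}=\D((D:_D x))$, which is Zariski open; conversely, the principal Zariski open $D(f)$, for $0\neq f\in D$, coincides with $\D((D:_D 1/f))$ because $(D:_D 1/f)=fD$, so $\phi(D(f))=\B(1/f)\cap\Localiz(D)$. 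The two subbases thus generate each other, $\phi$ is a homeomorphism, and (a) follows from the spectrality of $\Spec(D)$.

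For part (b), I would apply Lemma \ref{lemma:YX-cons} to the subbasis $\{\B(x):x\in K\}$ of $\Over(D)$. Each $\B(x)=\Over(D[x])$ is spectral, hence compact. Via the homeomorphism $\phi$, the intersection $\B(x)\cap\Localiz(D)$ corresponds to $\D((D:_D x))\subseteq\Spec(D)$, and such a set is compact in $\Spec(D)$ precisely when $\rad((D:_D x))$ equals the radical of a finitely generated ideal---which is exactly the rad-colon coherence condition at $x$. So if $D$ is rad-colon coherent, every $\B(x)\cap\Localiz(D)$ is compact and Lemma \ref{lemma:YX-cons} yields proconstructibility of $\Localiz(D)$ in $\Over(D)$.

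For the converse of (b), assume $\Localiz(D)$ is proconstructible in $\Over(D)$. The inclusion of a proconstructible subspace is a spectral map, so $\B(x)\cap\Localiz(D)$ is compact for every $x\in K$; through $\phi$ this translates into $\D((D:_D x))$ being compact in $\Spec(D)$ for all $x$, forcing $\rad((D:_D x))$ to agree with the radical of a finitely generated ideal. Hence $D$ is rad-colon coherent. The main technical hurdle is setting up the homeomorphism $\Localiz(D)\cong\Spec(D)$ in part (a); once this bridge between the two topologies is in place, part (b) reduces to a direct reading of rad-colon coherence through the standard compactness criterion for open subsets of a prime spectrum.
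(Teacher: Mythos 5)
Your proof is correct and follows essentially the same route as the paper: identify $\Localiz(D)$ with $\Spec(D)$ via $P\mapsto D_P$ and translate $\B(x)\cap\Localiz(D)$ into $\D((D:_D x))$, then read compactness back and forth. The only cosmetic differences are that the paper cites an existing lemma (from Dobbs--Fedder--Fontana) for the homeomorphism in (a) rather than verifying it directly as you do, and for the converse in (b) the paper argues via closedness in the constructible topology while you invoke the spectrality of the inclusion map; these are equivalent formulations of the same fact.
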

\begin{proof}
\ref{teor:Loc:sp} By \cite[Lemma 2.4]{dobbs_fedder_fontana}, the map
\begin{equation*}
\begin{aligned}
\lambda\colon\Spec(D) & \longrightarrow \Over(D)\\
P & \longmapsto D_P.
\end{aligned}
\end{equation*}
is a topological embedding whose image is exactly $\Localiz(D)$. In particular, since $\Spec(D)$ is a spectral space, so is $\Localiz(D)$.

\ref{teor:Loc:proc} We first note that
\begin{equation*}
\begin{array}{rcl}
\B(x)\cap\Localiz(D) & = & \{D_P\in\Localiz(D)\mid x\in D_P\}\\
& = & \{D_P\in\Localiz(D)\mid 1\in(D_P:x)\cap D\}=\\
& = & \{D_P\in\Localiz(D)\mid 1\in(D:_D:x)D_P\}=\\
& = & \{D_P\in\Localiz(D)\mid (D:_D:x)\subsetneq P\}=\lambda(\D((D:_Dx))).
\end{array}
\end{equation*}

Suppose $\Localiz(D)$ is proconstructible in $\Over(D)$. Since, for any $x\in K$, $\B(x)$ is also a proconstructible subspace of $\Over(D)$, then $\B(x)\cap\Localiz(D)$ is closed in $\Over(D)^\cons$; since the Zariski topology is weaker than the constructible topology, $\B(x)\cap\Localiz(D)$ must be compact in the Zariski topology. By the previous calculation, $\B(x)\cap\Localiz(D)=\lambda(\D(D:_Dx))$, and thus $\D((D:_D x))$ must be compact. Hence, $D$ is rad-colon coherent.

Conversely, suppose $D$ is rad-colon coherent. Then, each $\B(x)\cap\Localiz(D)$ is compact, and thus $\{\B(x)\cap\Localiz(D)\mid x\in K\}$ is a subbasis of compact subsets for $\Localiz(D)$; applying Lemma \ref{lemma:YX-cons} we see that $\Localiz(D)$ is a proconstructible subset of $\Over(D)$.
\end{proof}

As a first use of this theorem, we give an example of a domain that is not rad-colon coherent.
\begin{ex}\label{ex:Loc-nonconstr}
Let $D$ be an essential domain that is not a P$v$MD; that is, suppose that $D$ is the intersection of a family of valuation rings, each of which is a localization of $D$, but suppose that there is a $t$-prime ideal $P$ such that $D_P$ is not a valuation ring. Such a ring does indeed exists -- see \cite{ohm-essential}.

Let $\mathcal{E}$ be the set of prime ideals $P$ of $D$ such that $D_P$ is a valuation domain. Since $D$ is not a P$v$MD, not all $t$-primes are in $\mathcal{E}$. Since $\mathcal{E}\subseteq \qspec{t}(D)$ \cite[Lemma 3.17]{kang_pvmd}, we thus have $\mathcal{E}\subsetneq \qspec{t}(D)$. If $\mathcal{E}$ is compact, then $s_\mathcal{E}$ is a semistar operation of finite type on $D$; however, since $D$ is essential (and thus, by definition, $\bigcap\{D_P\mid P\in\mathcal{E}\}=D$) we have $D^{s_\mathcal{E}}=D$, and thus the restriction of $s_\mathcal{E}$ to the fractional ideals of $D$ is a spectral star operation of finite type, which implies that $I^{s_{\mathcal{E}}}\subseteq I^w$ for every finite-type operation. In particular,
\begin{equation*}
\mathcal{E}=\qspec{s_\mathcal{E}}(D)\supseteq \qspec{w}(D)\supseteq \qspec{t}(D),
\end{equation*}
and thus $\mathcal{E}=\qspec{t}(D)$, a contradiction. Therefore, $\mathcal{E}$ is not compact.

However, $\lambda(\mathcal{E})=\Localiz(D)\cap\Zar(D)$; if $\Localiz(D)$ were to be proconstructible in $\Over(D)$, so would be $\lambda(\mathcal{E})$ (since $\Zar(D)$ is always proconstructible). But this would imply that $\lambda(\mathcal{E})$ is, in particular, compact, a contradiction. Hence $\Localiz(D)$ is not proconstructible in $\Over(D)$, and $D$ is not rad-colon coherent.
\end{ex}

There are at least three natural ways to extend $\Localiz(D)$ to non-local overrings of $D$.

The first is by considering general localizations of $D$ (which we will call, for clarity, \emph{quotient rings}), that is, overrings in the form $S^{-1}D$ for some multiplicatively closed subsets $S$ of $D$. We denote this set by $\Overqr(D)$.

The second is through the set of \emph{flat} overrings of $D$ (that is, overrings that are flat when considered as $D$-modules). We denote this set by $\Overflat(D)$.

The third is by considering \emph{sublocalizations} of $D$, i.e., overrings that are intersection of localizations (or, equivalently, quotient rings) of $D$.  We denote this set by $\Oversloc(D)$.

It is well-known that $\Overqr(D)\subseteq\Overflat(D)\subseteq\Oversloc(D)$, and that both inclusions may be strict. For example, any overring of a Pr\"ufer domain is flat, but it need not be a quotient ring: in the case of Dedekind domains, this happens if and only if the class group of $D$ is torsion \cite[Corollary 2.6]{gilmer_qr} (more generally, a Pr\"ufer domain $D$ such that $\Overqr(D)=\Overflat(D)$ is said to be a \emph{QR-domain} -- see \cite[Section 27]{gilmer} or \cite[Section 3.2]{fontana_libro}). As for sublocalizations that are not flat, we shall give an example later (Example \ref{ex:flatqr}); see also \cite{well-centered}.

In all three cases, a natural question is to ask if (or when) the spaces are spectral, and if (or when) they are proconstructible in $\Over(D)$; moreover, we could ask if there is some construction through which we can represent them. We shall treat the case of quotient rings in Section \ref{sect:qr}, the case of sublocalizations in Section \ref{sect:sloc} and the case of flat overrings in Section \ref{sect:flat}.

A first result is a relation between their proconstructibility and the proconstructibility of $\Localiz(D)$.
\begin{prop}\label{prop:intersez-Loccons}
Let $D$ be an integral domain. If $\Overqr(D)$ or $\Overflat(D)$ is proconstructible, then $D$ is rad-colon coherent.
\end{prop}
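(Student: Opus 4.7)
The plan is to realize $\Localiz(D)$ as the intersection of $\Overqr(D)$ (respectively $\Overflat(D)$) with the proconstructible subspace of $\Over(D)$ consisting of the local overrings, and then invoke Theorem \ref{teor:Loc}\ref{teor:Loc:proc}.

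Write $L \subseteq \Over(D)$ for the subspace of local overrings of $D$. As recalled in the Introduction, $L$ is a proconstructible subspace of $\Over(D)$ (this is essentially \cite[Corollary 2.14]{surveygraz}). The core claim is then the identity
\begin{equation*}
\Localiz(D) = \Overqr(D) \cap L = \Overflat(D) \cap L.
\end{equation*}
The inclusions $\subseteq$ are immediate, since each $D_P$ is a quotient ring, is flat over $D$, and is local. For the reverse inclusions, let $T$ be a local overring of $D$ with maximal ideal $M$, and set $P := M \cap D$. If $T = S^{-1}D$ is a quotient ring, then every element of $S$ is a unit of $T$, hence does not lie in $M$ and a fortiori not in $P$, so that $T = S^{-1}D \subseteq D_P$; conversely each $d \in D \setminus P$ is a unit of $T$, giving $D_P \subseteq T$, and therefore $T = D_P$. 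If instead $T$ is flat over $D$, then the standard structural property of flat overrings $T_Q = D_{Q \cap D}$ for every $Q \in \Spec(T)$, applied to $Q = M$, yields $T = T_M = D_{M \cap D} = D_P$. In either case $T \in \Localiz(D)$.

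With the identity in hand the conclusion is a formality: if $\Overqr(D)$ or $\Overflat(D)$ is proconstructible in $\Over(D)$, then intersecting with the proconstructible set $L$ shows that $\Localiz(D)$ itself is proconstructible in $\Over(D)$, and Theorem \ref{teor:Loc}\ref{teor:Loc:proc} forces $D$ to be rad-colon coherent.

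The main obstacle is the equality $\Overflat(D) \cap L = \Localiz(D)$, which rests on the fact that a local flat overring of $D$ is automatically the localization at a prime; this is standard but deserves an explicit citation. The analogous statement for quotient rings is elementary, and the proconstructibility of $L$ in $\Over(D)$ is supplied by the literature.
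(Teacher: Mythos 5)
Your proof is correct and follows essentially the same route as the paper: write $\Localiz(D)$ as the intersection of $\Overqr(D)$ (or $\Overflat(D)$) with the proconstructible space of local overrings, and then apply Theorem \ref{teor:Loc}\ref{teor:Loc:proc}. The paper states the identity $X \cap \inslocali(D) = \Localiz(D)$ without proof, so the extra detail you supply (in particular the flat-overring case) is a welcome elaboration rather than a deviation.
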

\begin{proof}
Let $X$ be either $\Overqr(D)$ or $\Overflat(D)$, and let $\inslocali(D)$ be the space of local overrings of $D$. Then, $X\cap\inslocali(D)=\Localiz(D)$; since $\inslocali(D)$ is always proconstructible \cite[Corollary 2.14]{surveygraz}, if $X$ is proconstructible so is $\Localiz(D)$. By Theorem \ref{teor:Loc}\ref{teor:Loc:proc}, it follows that $D$ is rad-colon coherent.
\end{proof}

Note that $\Oversloc(D)\cap\inslocali(D)$ may not be equal to $\Localiz(D)$ -- see Example \ref{ex:flatqr}.

\section{Quotient rings}\label{sect:qr}
As localizations at prime ideals of $D$ can be represented through $\Spec(D)$, we can represent quotient rings by multiplicatively closed subsets; more precisely, there is a one-to-one correspondence between $\Overqr(D)$ and the set of multiplicatively closed subsets that are saturated. For technical reasons, it is more convenient to work with the complements of multiplicatively closed subsets.

\begin{defin}\label{defin:scal}
Let $R$ be a ring (not necessarily a domain). A \emph{semigroup prime} on $R$ is a nonempty subset $\mathscr{Q}\subseteq R$ such that:
\begin{enumerate}
\item for each $r\in R$ and for each $\pi\in\mathscr{Q}$, $r\pi\in\mathscr{Q}$;
\item for all $\sigma,\tau\in R\setminus\mathscr{Q}$, $\sigma\tau\in R\setminus \mathscr{Q}$;
\item $\mathscr{Q}\neq R$.
\end{enumerate}
\end{defin}

By \cite[(2.3)]{olberding_noetherianspaces}, a nonempty $\mathscr{Q}\subseteq R$ is a semigroup prime of $R$ if and only if it is a union of prime ideals, if and only if $R\setminus\mathscr{Q}$ is a saturated multiplicatively closed subset.

Let $\scal(R)$ denote the set of semigroup primes of a ring $R$. As in \cite{olberding_noetherianspaces} and in \cite{primi-sgr}, we endow $\scal(R)$ with the topology (which we call the \emph{Zariski topology}) whose subbasic closed sets have the form
\begin{equation*}
\V_\scal(x_1,\ldots,x_n):=\{\mathscr{Q}\in\scal(R)\mid x_1,\ldots,x_n\in\mathscr{Q}\},
\end{equation*}
as $x_1,\ldots,x_n$ ranges in $R$; equivalently, we can consider the subbasis of open sets
\begin{equation*}
\D_\scal(x_1,\ldots,x_n):=\scal(R)\setminus\V_\scal(x_1,\ldots,x_n)=\{\mathscr{Q}\in\scal(R)\mid x_i\notin\mathscr{Q}\text{~for some~}i\}.
\end{equation*}

We collect the properties of this topology of our interest in the next proposition.
\begin{prop}\label{prop:scal}
\cite[Propositions 2.3 and 3.1]{primi-sgr} Let $R$ be a ring and endow $\scal(R)$ with the Zariski topology.
\begin{enumerate}[(a)]
\item\label{prop:scal:subbasis} The family $\{\D_\scal(x)\mid x\in R\}$ is a basis of compact and open subsets of $\scal(R)$, which is closed by intersections.
\item\label{prop:scal:incl} The set-theoretic inclusion $\Spec(R)\hookrightarrow\scal(R)$ is a topological embedding.
\item\label{prop:scal:spectral} $\scal(R)$ is a spectral space.
\item\label{prop:scal:lambdaqr} Suppose $D$ is an integral domain. The map
\begin{equation*}
\begin{aligned}
\lambda_{qr}\colon\scal(D) & \longrightarrow \Over(D)\\
\mathscr{Q} & \longmapsto (R\setminus\mathscr{Q})^{-1}D.
\end{aligned}
\end{equation*}
is a topological embedding whose image is $\Overqr(D)$.
\end{enumerate}
\end{prop}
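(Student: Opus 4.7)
The statement bundles four claims; I would establish them in order, since each builds on the previous one. For \ref{prop:scal:subbasis}, the main computation is that the displayed subbasis is already closed under finite intersections: for $x,y\in R$ one checks
\[
\D_\scal(x)\cap\D_\scal(y)=\D_\scal(xy),
\]
where $\supseteq$ uses that $R\setminus\mathscr{Q}$ is multiplicatively closed (condition~(2) of Definition~\ref{defin:scal}) and $\subseteq$ uses that $\mathscr{Q}$ absorbs multiplication by $R$ (condition~(1)). Hence these sets form a basis. For compactness of $\D_\scal(x)$, the cleanest route is the ultrafilter/Zorn argument: given a cover by subbasic open sets $\{\D_\scal(y_\alpha)\}$ with no finite subcover, the family of finite subproducts of the $y_\alpha$ together with $\{x^n\}$ is a multiplicatively closed set not containing $0$, so its saturation yields a semigroup prime in the cover's complement — a contradiction.

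For \ref{prop:scal:incl}, every prime ideal is trivially a union of prime ideals, so $\Spec(R)\subseteq\scal(R)$ as sets; the trace of $\D_\scal(x)$ on $\Spec(R)$ is precisely the principal open $\{P\mid x\notin P\}$, which already generates the Zariski topology, so the inclusion is a topological embedding. For \ref{prop:scal:spectral} I would verify Hochster's criterion directly: $T_0$-ness is immediate (if $x\in\mathscr{Q}_1\setminus\mathscr{Q}_2$ then $\mathscr{Q}_2\in\D_\scal(x)\not\ni\mathscr{Q}_1$), and \ref{prop:scal:subbasis} supplies the basis of compact opens closed under intersection. The sober/irreducible-closed condition is most efficiently handled by embedding $\scal(R)\hookrightarrow\{0,1\}^R$ via characteristic functions: each defining condition in Definition~\ref{defin:scal} translates into a closed condition in the product topology (e.g.\ for fixed $r,\pi$, the set $\{\chi\mid\chi(\pi)=1\Rightarrow\chi(r\pi)=1\}$ is clopen), so the image is closed, hence compact and Hausdorff. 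This product topology refines the Zariski topology and has the basic opens of~\ref{prop:scal:subbasis} as clopens, so it is the patch topology of $\scal(R)$, and sobriety follows in standard fashion.

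For \ref{prop:scal:lambdaqr} I would first set up the bijection. By \cite[(2.3)]{olberding_noetherianspaces}, $\mathscr{Q}\mapsto D\setminus\mathscr{Q}$ gives a bijection between $\scal(D)$ and saturated multiplicatively closed subsets of $D$, and every quotient ring equals $S^{-1}D$ for a unique saturated $S$; this shows $\lambda_{qr}$ is a bijection onto $\Overqr(D)$. For the topology, the key identity is
\[
\lambda_{qr}(\D_\scal(y))=\B(1/y)\cap\Overqr(D)\qquad(y\in D,\ y\neq 0),
\]
because for $\mathscr{Q}\in\scal(D)$ with $S=D\setminus\mathscr{Q}$ one has $1/y\in S^{-1}D\iff y\in S\iff y\notin\mathscr{Q}$. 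This simultaneously proves that $\lambda_{qr}$ is open onto its image and that $\lambda_{qr}^{-1}$ is continuous on a subbasis. In the reverse direction, for arbitrary $x=a/b\in K$ one has $x\in S^{-1}D$ iff $(D:_D x)\cap S\neq\emptyset$ iff $(D:_D x)\not\subseteq\mathscr{Q}$, which gives
\[
\lambda_{qr}^{-1}\bigl(\B(x)\cap\Overqr(D)\bigr)=\bigcup_{y\in(D:_Dx)}\D_\scal(y),
\]
an open set; continuity of $\lambda_{qr}$ follows.

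The routine parts are \ref{prop:scal:subbasis} and~\ref{prop:scal:incl}. The main obstacle is \ref{prop:scal:spectral}: producing the patch topology and verifying sobriety requires the auxiliary closed embedding into $\{0,1\}^R$, which one has to justify axiom by axiom. The bookkeeping in~\ref{prop:scal:lambdaqr} — matching basic opens of $\Over(D)$ (indexed by elements of $K$) with basic opens of $\scal(D)$ (indexed by elements of $D$) via colon ideals — is conceptually straightforward but must be done with some care.
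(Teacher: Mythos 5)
This Proposition is cited in the paper from \cite[Propositions 2.3 and 3.1]{primi-sgr} with no proof of its own, so there is no in-paper argument to compare against; I can only assess your proof on its merits.

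Your treatment of \ref{prop:scal:incl} and \ref{prop:scal:lambdaqr} is correct and exactly the natural approach; the colon-ideal identity $\lambda_{qr}^{-1}\bigl(\B(x)\cap\Overqr(D)\bigr)=\bigcup_{y\in(D:_Dx)}\D_\scal(y)$ and the identity $\lambda_{qr}(\D_\scal(y))=\B(1/y)\cap\Overqr(D)$ are the right computations, and the embedding into $\{0,1\}^R$ in \ref{prop:scal:spectral} is the standard way to produce the patch topology and verify the Hochster axioms.

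There is, however, a genuine flaw in the compactness argument you give in \ref{prop:scal:subbasis}. You consider the multiplicatively closed set generated by $x$ and the $y_\alpha$; granting that it avoids $0$ (which itself needs justification from the no-finite-subcover hypothesis), its saturation $\bar S$ produces the semigroup prime $\mathscr{Q}:=R\setminus\bar S$, which satisfies $x\notin\mathscr{Q}$ \emph{and} $y_\alpha\notin\mathscr{Q}$ for every $\alpha$. That places $\mathscr{Q}$ inside $\D_\scal(x)\cap\bigcap_\alpha\D_\scal(y_\alpha)$, i.e.\ squarely \emph{inside} the cover, so no contradiction results. What one actually needs is a $\mathscr{Q}$ with $x\notin\mathscr{Q}$ but $y_\alpha\in\mathscr{Q}$ for all $\alpha$; the construction you describe produces the opposite. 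A correct direct argument: assuming no singleton of the cover suffices, for each $\alpha$ pick a prime $P_\alpha$ with $y_\alpha\in P_\alpha$ and $x\notin P_\alpha$ (such a $P_\alpha$ exists inside the semigroup prime witnessing the failure of $\D_\scal(x)\subseteq\D_\scal(y_\alpha)$), and set $\mathscr{Q}:=\bigcup_\alpha P_\alpha$. Alternatively, and more economically, note that your $\{0,1\}^R$ embedding in \ref{prop:scal:spectral} already shows each $\D_\scal(x)$ is clopen in a compact topology finer than the Zariski one, which gives the compactness in \ref{prop:scal:subbasis} for free; as written your proof proves compactness twice, once incorrectly. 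Finally, a bookkeeping slip: in $\D_\scal(x)\cap\D_\scal(y)=\D_\scal(xy)$, the inclusion $\subseteq$ uses that $R\setminus\mathscr{Q}$ is multiplicatively closed (condition~(2)) and $\supseteq$ uses absorption (condition~(1)); you have these two roles reversed.
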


In particular, by points \ref{prop:scal:spectral} and \ref{prop:scal:lambdaqr} of the previous proposition we get immediately the following result.
\begin{cor}\label{cor:qr-spec}
$\Overqr(D)$ is a spectral space for every integral domain $D$.
\end{cor}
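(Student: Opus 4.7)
The plan is essentially a one-line deduction from Proposition \ref{prop:scal}. By part \ref{prop:scal:spectral} of that proposition, $\scal(D)$ is a spectral space; by part \ref{prop:scal:lambdaqr}, the map $\lambda_{qr}\colon\scal(D)\to\Over(D)$ is a topological embedding whose image is $\Overqr(D)$. Therefore $\lambda_{qr}$ restricts to a homeomorphism from $\scal(D)$ onto $\Overqr(D)$ equipped with the subspace topology inherited from $\Over(D)$. Since being a spectral space is a topological invariant, $\Overqr(D)$ is spectral.

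No real obstacle arises at the level of the corollary itself, because all of the substantive work has already been absorbed into Proposition \ref{prop:scal}: namely, the verification that $\scal(D)$ satisfies the Hochster axioms (using the basis $\{\D_\scal(x)\mid x\in D\}$ of compact open subsets) and the identification, via complementation, of saturated multiplicatively closed subsets of $D$ with semigroup primes. The corollary is simply the bookkeeping observation that a topological embedding transports spectrality to its image.
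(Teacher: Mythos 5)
Your argument is correct and is exactly the paper's: the paper also derives this corollary immediately from Proposition \ref{prop:scal}, parts \ref{prop:scal:spectral} and \ref{prop:scal:lambdaqr}. You have just spelled out the bookkeeping step (an embedding transports spectrality to its image) slightly more explicitly.
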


On the other hand, proconstructibility holds less frequently for $\Overqr(D)$ than it does for $\Localiz(D)$.
\begin{teor}\label{teor:Overqr}
Let $D$ be an integral domain with quotient field $K$. Then, $\Overqr(D)$ is proconstructible in $\Over(D)$ if and only if, for every $x\in K$, the ideal $\rad((D:_Dx))$ is the radical of a \emph{principal} ideal.
\end{teor}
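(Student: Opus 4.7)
The plan is to transfer the question to $\scal(D)$ via the embedding $\lambda_{qr}$. A short computation gives that, for any $x\in K$, $\B(x)\cap\Overqr(D)$ corresponds under $\lambda_{qr}$ to
\[
U_x:=\bigcup_{a\in (D:_Dx)}\D_\scal(a)=\{\mathscr{Q}\in\scal(D)\mid (D:_Dx)\not\subseteq \mathscr{Q}\},
\]
because $x\in (D\setminus\mathscr{Q})^{-1}D$ iff $(D:_Dx)$ meets $D\setminus\mathscr{Q}$. A second key observation is that for any semigroup prime $\mathscr{Q}$ and any ideal $I$ of $D$, one has $I\subseteq\mathscr{Q}$ iff $\rad(I)\subseteq\mathscr{Q}$, because $D\setminus\mathscr{Q}$ is saturated multiplicative. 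Combining these, $U_x=\D_\scal(b)$ exactly when $\rad((D:_Dx))=\rad((b))$.

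For the sufficiency direction, assume that for each $x$ there is $b_x\in D$ with $\rad((D:_Dx))=\rad((b_x))$. Then $\B(x)\cap\Overqr(D)=\lambda_{qr}(\D_\scal(b_x))$, which is compact by Proposition \ref{prop:scal}\ref{prop:scal:subbasis}. Since $\{\B(x)\mid x\in K\}$ is a subbasis of $\Over(D)$ consisting of compact opens, Lemma \ref{lemma:YX-cons} applies and yields proconstructibility of $\Overqr(D)$.

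For the necessity direction, fix $x\in K$ and set $I:=(D:_Dx)$. Proconstructibility implies that $U_x$ is compact in $\scal(D)$, so a finite subcover gives $U_x=\D_\scal(a_1)\cup\cdots\cup\D_\scal(a_n)$ with $a_i\in I$. I may take this list to be \emph{reduced}, meaning $a_i\notin\rad((a_j))$ whenever $i\neq j$, by iteratively removing any $a_i$ with $\D_\scal(a_i)\subseteq\D_\scal(a_j)$ for some $j\neq i$ (an inclusion equivalent to $a_i\in\rad((a_j))$). Once $n\leq 1$ is established, testing the equality on single prime ideals gives $\rad(I)=\rad((a_1))$ (the case $n=0$ does not arise, since $x=p/q$ with $q\in D\setminus\{0\}$ forces $q\in I$).

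The main obstacle is precisely the reduction to $n\leq 1$, for which my plan is prime avoidance. Assuming $n\geq 2$, reducedness provides a prime $P_1\ni a_1$ with $a_2\notin P_1$, a prime $P_2\ni a_2$ with $a_1\notin P_2$, and, for each $k\geq 3$, a prime $P_k\ni a_k$ with $a_1\notin P_k$. The semigroup prime $\mathscr{Q}:=P_1\cup\cdots\cup P_n$ then contains $\{a_1,\ldots,a_n\}$, so the equality $U_x=\D_\scal(a_1)\cup\cdots\cup\D_\scal(a_n)$ forces $I\subseteq\mathscr{Q}$. Since $\mathscr{Q}$ is a \emph{finite} union of prime ideals, standard prime avoidance yields $I\subseteq P_k$ for some $k$; but $a_1,a_2\in I$ together with the choices $a_2\notin P_1$, $a_1\notin P_2$, and $a_1\notin P_k$ for $k\geq 3$ contradict this in every case. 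Hence $n\leq 1$, completing the argument.
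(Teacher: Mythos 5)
Your argument is correct, and both directions ultimately rest on the same ingredients as the paper's proof (the embedding $\lambda_{qr}$, the identification of $\B(x)\cap\Overqr(D)$ via the ideal $(D:_Dx)$, and prime avoidance), but the necessity direction is organized differently enough to be worth noting. The paper works in $\Over(D)$, proves the contrapositive, and uses prime avoidance in its ``escape'' form: given a finite subfamily $\{\B(z_i^{-1})\}$ it constructs a concrete witness $D[y^{-1}]$ in $\B(x)\cap\Overqr(D)$ that escapes the subfamily (this requires the intermediate Claims 1 and 2, which justify that every $T\in\B(x)\cap\Overqr(D)$ lies in some $\B(w^{-1})$ with $w\in I$, and that $D[y^{-1}]\in\B(x)$ exactly when $y\in I$). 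You instead stay inside $\scal(D)$, where the identity $U_x=\bigcup_{a\in I}\D_\scal(a)$ makes the covering statement immediate and both Claims are absorbed into one line; then, assuming compactness, you extract a finite subcover, pass to a reduced one, and use prime avoidance in its ``containment'' form ($I\subseteq\bigcup P_k\Rightarrow I\subseteq P_k$) to force $n\le1$, reading off $\rad(I)=\rad((a_1))$ directly. The witness $\mathscr{Q}=P_1\cup\cdots\cup P_n$ you build corresponds, under $\lambda_{qr}$, to the quotient ring the paper would be testing; so the two proofs are mirror images, with your version trading the explicit ring-theoretic computations of Claims 1--2 for the mild bookkeeping of the reduced cover. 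Both the reduction step (via $\D_\scal(a_i)\subseteq\D_\scal(a_j)\iff a_i\in\rad((a_j))$, a fact worth stating explicitly) and the $n=0$ case (ruled out since $(0)$ is a semigroup prime and $I\neq(0)$) check out.
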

\begin{proof}
As in the proof of Theorem \ref{teor:Loc}, we see that an overring $T$ is in $\B(x)\cap\Overqr(D)$ if and only if $T=\lambda_{qr}(\mathscr{Q})$ for some semigroup prime $\mathscr{Q}$ not containing $(D:_Dx)$. Moreover, we note that a semigroup prime contains an ideal $I$ if and only if it contains the radical of $I$.

Therefore, if each $\rad((D:_Dx))$ is the radical of a principal ideal, then each $\B(x)\cap\Overqr(D)$ is equal to $\lambda_{qr}(\D_\scal(y))$ for some $y\in D$. However, by Proposition \ref{prop:scal}\ref{prop:scal:subbasis}, $\D_\scal(y)$ is compact, and thus so is $\B(x)\cap\Overqr(D)$; by Lemma \ref{lemma:YX-cons}, $\Overqr(D)$ is proconstructible in $\Over(D)$.

Conversely, suppose there is a $x\in K$ be such that $I:=\rad((D:_Dx))$ is not the radical of a principal ideal.

\medskip

\emph{Claim 1}: let $y\in D$. Then, $D[y^{-1}]\in\B(x)$ if and only if $y\in I$.

\smallskip

If $x\in D[y^{-1}]$, then
\begin{equation}\label{eq:claim2}
1\in\left(D\left[y^{-1}\right]:_{D\left[y^{-1}\right]}x\right)= (D:_Dx)D\left[y^{-1}\right],
\end{equation}
since $D[y^{-1}]$ is flat over $D$.

If now $P\in\V(I)$ (i.e., $I\subseteq P$), then in particular $(D:_Dx)\subseteq P$, and so $PD[y^{-1}]=D[y^{-1}]$; it follows that $y\in P$. Since this happens for every $P\in\V(I)$ and $I$ is a radical ideal, $y\in I$.

Suppose now that $y\in I$. Then, every prime ideal containing $I$ explodes in $D[y^{-1}]$, and thus $ID[y^{-1}]=D[y^{-1}]$. Therefore, the same happens to $(D:_Dx)$, and so $x\in D[y^{-1}]$ (with the same calculation of \eqref{eq:claim2}, just backwards).

\medskip

Let now $\mathcal{U}:=\{\B(z^{-1})\mid z\in I\}$.

\medskip

\emph{Claim 2}: $\mathcal{U}$ is an open cover of $\B(x)\cap\Overqr(D)$.

\smallskip

Let $T\in\B(x)\cap\Overqr(D)$: then, $1\in(T:_Tx)=(D:_Dx)T$, and thus there are $d_1,\ldots,d_n\in(D:_Dx)$, $t_1,\ldots,t_n\in T$ such that $1=d_1t_1+\cdots+d_nt_n$. For every $i$, there is a $w_i\in D$ such that $w_i^{-1}\in T$ and $w_it_i\in D$; let $w:=w_1\cdots w_n$. Then, $w$ is invertible in $T$, and thus $D[w^{-1}]\subseteq T$, that is, $T\in\B(w^{-1})$; moreover,
\begin{equation*}
w=d_1wt_1+\cdots+d_nwt_n\in d_1D+\cdots+d_nD\subseteq(D:_Dx)\subseteq I,
\end{equation*}
and so $\B(w^{-1})\in\mathcal{U}$. Therefore, $\mathcal{U}$ is a cover of $\B(x)\cap\Overqr(D)$.

\medskip

\emph{Claim 3}: there are no finite subsets of $\mathcal{U}$ that cover $\B(x)\cap\Overqr(D)$.

\smallskip

Consider a finite subset $\mathcal{U}_0:=\{\B(z_1^{-1}),\ldots,\B(z_n^{-1})\}$ of $\mathcal{U}$, for some $z_1,\ldots,z_n\in I$. In particular, $\rad(z_iD)\subseteq I$ for every $I$; moreover, $\rad(z_iD)\neq I$ since $I$ is not the radical of any principal ideal. It follows that for every $i$ there is a prime ideal $P_i$ containing $z_i$ but not $I$. By prime avoidance, there is an $y\in I\setminus(P_1\cup\cdots\cup P_n)$; in particular, $D[y^{-1}]\in\B(x)\cap\Overqr(D)$.

We claim that $D[y^{-1}]\notin\B(z_i^{-1})$ for every $i$: indeed, $z_i\in P_i$, and $P_iD[y^{-1}]\neq D[y^{-1}]$. Therefore, $z_i$ is not invertible in $D[y^{-1}]$, and $z_i^{-1}\notin D[y^{-1}]$. Hence, $D[y^{-1}]$ is an element of $\B(x)\cap\Overqr(D)$ not contained in any element of $\mathcal{U}_0$, which thus is not a cover.

\medskip

Therefore, $\B(x)\cap\Overqr(D)$ is not compact; it follows that $\Overqr(D)$ is not proconstructible, as claimed.
\end{proof}

We remark that the first implication of the previous theorem follows also from \cite[Theorem 2.5]{well-centered} and the following Theorem \ref{teor:radcolon-sloc}.

\begin{cor}\label{cor:Noeth-qr}
Let $D$ be a Noetherian domain, and let $X^1(D)$ be the set of height-1 prime ideals of $D$. The following are equivalent:
\begin{enumerate}[(i)]
\item $\Overqr(D)$ is proconstructible in $\Over(D)$;
\item $D=\bigcap\{D_P\mid P\in X^1(D)\}$ and every $P\in X^1(D)$ is the radical of a principal ideal.
\end{enumerate}
\end{cor}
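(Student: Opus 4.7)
\emph{Proof plan.} The argument will go through Theorem \ref{teor:Overqr}, which characterizes proconstructibility of $\Overqr(D)$ by the condition that $\rad((D:_D x))$ is the radical of a principal ideal for every $x \in K$; in the Noetherian setting my plan is to show that this condition is equivalent to (ii).

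For (i)$\Rightarrow$(ii) I would argue in two independent steps. To obtain $D = \bigcap_{P \in X^1(D)} D_P$, take any $x \in K \setminus D$ and use Theorem \ref{teor:Overqr} to write $\rad((D:_D x)) = \rad(fD)$ for some non-unit $f \in D$; Krull's principal ideal theorem provides a minimal prime $P$ of $fD$ of height exactly one, and since $P \supseteq (D:_D x)$ we get $x \notin D_P$. For the second half, given $P \in X^1(D)$ the plan is to exhibit an $x \in K \setminus D$ whose colon ideal is $P$-primary and then apply the criterion. I would construct $x$ by primary decomposition: pick $a \in P \setminus \{0\}$ (so that $P$ is a minimal prime of $aD$ by Krull's principal ideal theorem, hence isolated), take an irredundant primary decomposition $aD = J \cap J'$ with $J$ the $P$-primary component, choose $b \in J' \setminus J$ available by irredundancy, and set $x := b/a$. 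A direct check gives $(D:_D x) = (J :_D b)$, which is $P$-primary because $J$ is $P$-primary and $b \notin J$; Theorem \ref{teor:Overqr} applied to this $x$ then forces $P = \rad((D:_D x)) = \rad(fD)$.

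For (ii)$\Rightarrow$(i) I again verify the criterion; the case $x \in D$ is trivial, so fix $x \in K \setminus D$. Every height-one prime $P$ with $x \notin D_P$ satisfies $P \supseteq (D:_D x)$ and is therefore a minimal prime of the nonzero ideal $(D:_D x)$ in the Noetherian ring $D$; thus only finitely many, say $P_1, \ldots, P_k$, occur, and by hypothesis $P_i = \rad(f_i D)$. The natural candidate is $f := f_1 \cdots f_k$, for which $\rad(fD) = \bigcap_i P_i$. The inclusion $\rad((D:_D x)) \subseteq \rad(fD)$ follows from $(D:_D x) \subseteq P_i$ for each $i$. The substance of the proof is the reverse inclusion, which reduces to showing $f^N x \in D$ for some positive~$N$.

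The verification of $f^N x \in D$ is where both halves of (ii) come together, and it is the main obstacle. My strategy is local-to-global: I check $f^N x \in D_P$ for every $P \in X^1(D)$ and then invoke $D = \bigcap_{P \in X^1(D)} D_P$. For $P \notin \{P_1, \ldots, P_k\}$, $x \in D_P$ already. For $P = P_i$, the crucial point is that $D_{P_i}$ is a one-dimensional Noetherian local domain, so the proper nonzero ideal $(D_{P_i} :_{D_{P_i}} x) = (D:_D x)D_{P_i}$ has radical equal to the maximal ideal $P_i D_{P_i}$, is $P_i D_{P_i}$-primary, and hence contains some power $(P_i D_{P_i})^{n_i}$. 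Since $f \in P_i$, choosing $N \geq \max_i n_i$ forces $f^N \in (D_{P_i} :_{D_{P_i}} x)$, i.e., $f^N x \in D_{P_i}$, and assembling the local information gives $f^N x \in D$, completing the argument.
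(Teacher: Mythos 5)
Your proof is correct, and it takes a genuinely different route from the paper's. Where the paper reduces both implications to facts about the $t$-operation (invoking the maximality of $t$ among finite-type star operations, the identity $D=\bigcap\{D_P\mid P\in\qspec{t}(D)\}$, and the fact that minimal primes of $t$-ideals are $t$-ideals), you work directly with primary decomposition and a local-to-global check. In particular, your construction of $x=b/a$ with $(D:_Dx)=(J:_Db)$ $P$-primary --- by taking an irredundant primary decomposition of $aD$ and picking $b\in J'\setminus J$ --- is an explicit and elementary replacement for the paper's appeal to $\bigcap\{D_P\mid P\in\D(Q)\}\neq D$ via star operations. Similarly, in (ii)$\Rightarrow$(i) the paper derives that the minimal primes of $(D:_Dx)$ all have height one from $\qspec{t}(D)=X^1(D)$, whereas you sidestep the question of whether $(D:_Dx)$ could have higher-height minimal primes entirely: you enumerate the height-one primes containing it, form $f=f_1\cdots f_k$, and verify $f^Nx\in D$ by checking it in each $D_{P_i}$ (a one-dimensional Noetherian local domain) and using $D=\bigcap_{P\in X^1(D)}D_P$. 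The paper's argument is shorter for a reader fluent in the $t$-spectrum machinery; yours is longer but entirely self-contained, using only Krull's principal ideal theorem, primary decomposition, and flat base change of colon ideals. One small point worth noting explicitly: in your (i)$\Rightarrow$(ii) construction, if $aD$ is already $P$-primary (i.e.\ the decomposition has a single component), there is no $J'$, but then $\rad(aD)=P$ directly and no construction is needed.
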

\begin{proof}
(i $\Longrightarrow$ ii) Suppose that $\Overqr(D)$ is proconstructible.

Let $Q$ be a prime $t$-ideal, and consider $A:=\bigcap\{D_P\mid P\in\D(Q)\}$. We claim that $A\neq D$: indeed, if $A=D$, then the map $\star:I\mapsto\bigcap\{ID_P\mid P\in\D(Q)\}$ would be a star operation of finite type (since $\D(Q)$ is compact) such that $Q^\star=D\nsubseteq Q=Q^t$, i.e., it would not be smaller than the $t$-operation, an absurdity. Hence, there is an $x\in A\setminus D$, and $\rad((D:_Dx))=Q$. By Theorem \ref{teor:Overqr}, $Q=\rad(yD)$ for some $y\in D$.

If $Q$ has not height 1, then this contradicts the Principal Ideal Theorem; thus, $\qspec{t}(D)=X^1(D)$, and $D=\bigcap\{D_P\mid P\in X^1(D)\}$.

(ii $\Longrightarrow$ i) Conversely, suppose that the two conditions hold; the first one implies that $\qspec{t}(D)=X^1(D)$ (since $X^1(D)$ is a compact subspace of $\Spec(D)$). For every $x\in K\setminus D$, $(D:_Dx)$ is a proper $t$-ideal, and thus its minimal primes are $t$-ideals, i.e., have height 1. However, $(D:_Dx)$ has only finitely many minimal primes, say $P_1,\ldots,P_n$, and by hypothesis $P_i=\rad(y_iD)$ for some $y_i\in D$; hence, $\rad((D:_Dx))$ is the radical of the principal ideal $y_1\cdots y_nD$. By Theorem \ref{teor:Overqr}, $\Overqr(D)$ is proconstructible.
\end{proof}

\begin{cor}
Let $D$ be a Krull domain, and let $X^1(D)$ be the set of height-1 prime ideals of $D$. Then, the following are equivalent:
\begin{enumerate}[(i)]
\item $\Overqr(D)$ is proconstructible in $\Over(D)$;
\item each $P\in X^1(D)$ is the radical of a principal ideal;
\item the class group of $D$ is a torsion group.
\end{enumerate}
\end{cor}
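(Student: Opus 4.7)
The plan is to funnel (i) $\Leftrightarrow$ (ii) through Theorem \ref{teor:Overqr} and to treat (ii) $\Leftrightarrow$ (iii) via the divisor class group. Three classical facts about Krull domains are used throughout: $D=\bigcap\{D_P\mid P\in X^1(D)\}$, the equality $\qspec{t}(D)=X^1(D)$, and the fact that every divisorial ideal has finitely many minimal primes, each of height one. For (ii) $\Rightarrow$ (i), given $x\in K\setminus D$, the conductor $(D:_Dx)=D\cap x^{-1}D$ is divisorial, so its radical is a finite intersection $P_1\cap\cdots\cap P_n$ with $P_i\in X^1(D)$. Using (ii), I would write $P_i=\rad(y_iD)$; the identity $\sqrt{IJ}=\sqrt{I}\cap\sqrt{J}$ then gives $\rad((D:_Dx))=\rad(y_1\cdots y_nD)$, and Theorem \ref{teor:Overqr} delivers (i).

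For (i) $\Rightarrow$ (ii), fix $P\in X^1(D)$; by Theorem \ref{teor:Overqr} it suffices to produce $x\in K$ with $\rad((D:_Dx))=P$. I would pick any $0\neq y\in P$ and list the finitely many height-1 primes $P=P_1,P_2,\dots,P_r$ containing $y$, with $m_i:=v_{P_i}(y)$. Since distinct height-1 primes are incomparable, $P_i^{(m_i)}\not\subseteq P$ for $i\geq 2$; pick $z_i\in P_i^{(m_i)}\setminus P$ and set $y':=z_2\cdots z_r$, $x:=y'/y$ (in the degenerate case $r=1$ one may simply take $x:=1/y$). A direct valuation check gives $v_P(x)<0$ and $v_Q(x)\geq 0$ for every other $Q\in X^1(D)$; hence $P$ is the only height-1 prime at which $x$ has a pole, and consequently $\rad((D:_Dx))=P$.

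For (ii) $\Leftrightarrow$ (iii), I would invoke the divisor-theoretic description of $\Cl(D)$: for $y\in D$, $\rad(yD)=P$ is equivalent to the principal divisor $\mathrm{div}(y)$ being a positive multiple of $P$, equivalently $n[P]=0$ in $\Cl(D)$ for some $n\geq 1$, i.e., $[P]$ is torsion. Since $\Cl(D)$ is an abelian group generated by $\{[P]\mid P\in X^1(D)\}$ and finite sums of torsion elements in an abelian group are torsion, every $[P]$ being torsion is equivalent to $\Cl(D)$ itself being a torsion group. The main obstacle is the construction of $x$ in (i) $\Rightarrow$ (ii), which combines prime avoidance with the finite-character structure of the Krull representation; once that is in place, the remaining pieces are routine Krull theory together with Theorem \ref{teor:Overqr}.
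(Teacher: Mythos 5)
Your proof is correct. The direction (ii) $\Rightarrow$ (i) runs parallel to the paper's own treatment via Corollary \ref{cor:Noeth-qr}: in a Krull domain the conductor $(D:_Dx)=D\cap x^{-1}D$ is divisorial, so it has finitely many minimal primes, all of height one, and combining (ii) with Theorem \ref{teor:Overqr} finishes that direction. Where you genuinely diverge is in (i) $\Rightarrow$ (ii) and in (ii) $\Leftrightarrow$ (iii). The paper disposes of (i) $\Leftrightarrow$ (ii) by pointing to Corollary \ref{cor:Noeth-qr}, whose (i) $\Rightarrow$ (ii) direction produces, for a $t$-prime $Q$, an element $x\in\bigcap\{D_P\mid P\in\D(Q)\}\setminus D$ via a star-operation contradiction (if that intersection were $D$, the map $I\mapsto\bigcap\{ID_P\mid P\in\D(Q)\}$ would be a finite-type star operation not dominated by $t$). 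You instead construct $x=y'/y$ explicitly from the essential valuations of a chosen $y\in P$, checking $v_P(x)<0$ and $v_Q(x)\geq 0$ for all $Q\in X^1(D)\setminus\{P\}$ so that $(D:_Dx)=P^{(m_1)}$ and $\rad((D:_Dx))=P$; this is more elementary and self-contained, and in the Krull setting arguably cleaner than routing through the $t$-operation argument. For (ii) $\Leftrightarrow$ (iii) the paper simply cites Wajnryb--Zaks; your direct divisor-class argument --- that $P=\rad(yD)$ for some $y$ precisely when the principal divisor of $y$ is a positive multiple of $P$, hence precisely when $[P]$ is torsion in $\Cl(D)$, together with the fact that $\Cl(D)$ is generated by the classes of the height-one primes --- reproduces exactly the content of that citation. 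Both routes reach the same conclusion; yours is fully self-contained, while the paper's is terser because it leans on the Noetherian corollary and the literature.
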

\begin{proof}
The equivalence between (i) and (ii) follows as in the previous corollary, noting that $D=\bigcap\{D_P\mid P\in X^1(D)\}$ holds for every Krull domain; the equivalence of (ii) and (iii) follows from the proof of Theorem 1 of \cite{waj-zak}.
\end{proof}

\section{Sublocalizations}\label{sect:sloc}
Our first result about $\Oversloc(D)$ shows a striking difference between the space of sublocalizations and the spaces we considered in the previous sections.
\begin{prop}\label{prop:sloc-consspec}
Let $D$ be an integral domain. Then, $\Oversloc(D)$ is a spectral space if and only if it is proconstructible in $\Over(D)$.
\end{prop}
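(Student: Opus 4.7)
The equivalence has two implications. The direction ``proconstructible implies spectral'' is immediate from the general property, recorded in the preliminaries, that every proconstructible subset of a spectral space inherits a spectral structure.

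For the converse, assume $\Oversloc(D)$ is spectral. My plan is to apply Lemma~\ref{lemma:YX-cons} with the subbasis $\mathcal{B}=\{\B(x):x\in K\}$ of $\Over(D)$: each $\B(x)=\Over(D[x])$ is already compact, so the task reduces to verifying that $\B(x)\cap\Oversloc(D)$ is compact inside $\Oversloc(D)$ for every $x\in K$.

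The decisive structural feature I would exploit is that $\Oversloc(D)$ is closed under arbitrary intersections: an intersection of sublocalizations of $D$ is plainly still an intersection of localizations of $D$. Setting
\[
A_x:=\bigcap\{T\in\Oversloc(D):x\in T\},
\]
this yields a point $A_x$ of $\Oversloc(D)$ that is minimal among sublocalizations containing $x$, so that $\B(x)\cap\Oversloc(D)=\{T\in\Oversloc(D):T\supseteq A_x\}$. In the specialization order of $\Oversloc(D)$, where $S$ is a generalization of $T$ exactly when $T\subseteq S$, this is precisely the set of generalizations of the point $A_x$.

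To finish I invoke the general fact that in any spectral space the set of generalizations of a point coincides with the closure of that point in the inverse topology, and closed subsets of the inverse topology are by definition compact in the original Zariski topology. Applied inside $\Oversloc(D)$ at the point $A_x$, this delivers compactness of $\B(x)\cap\Oversloc(D)$, and Lemma~\ref{lemma:YX-cons} then yields that $\Oversloc(D)$ is proconstructible in $\Over(D)$. No serious obstacle remains once the plan is in place: the only nontrivial ingredient is the closure of $\Oversloc(D)$ under arbitrary intersections, which is exactly what guarantees that the candidate ``minimum'' $A_x$ is genuinely a point of $\Oversloc(D)$ rather than merely of $\Over(D)$.
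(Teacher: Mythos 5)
Your proof is correct and follows the same route as the paper: proconstructible $\Rightarrow$ spectral is general theory, and the converse reduces via Lemma~\ref{lemma:YX-cons} to the compactness of each $\B(x)\cap\Oversloc(D)$, which holds because $\Oversloc(D)$ is closed under intersections and therefore $\B(x)\cap\Oversloc(D)$ has a minimum element $A_x$. The only (minor) difference is in how you justify that the set of generizations of $A_x$ is compact: you pass through the inverse topology, invoking that it is the inverse-closure of $\{A_x\}$ and that inverse-closed sets are compact. That works, but it is a slight detour; the compactness is immediate from the fact that open sets are closed under generization, so any open set of a cover that contains the minimum $A_x$ already contains all of its generizations and is by itself a finite subcover.
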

\begin{proof}
If $\Oversloc(D)$ is proconstructible, then it is spectral. On the other hand, for every $x_1,\ldots,x_n\in K$, the intersection $\B(x_1,\ldots,x_n)\cap\Oversloc(D)$ is compact, since it has a minimum, namely the intersection of the localizations of $D$ that contain $x_1,\ldots,x_n$. Since $\{\B(x_1,\ldots,x_n)\cap\Oversloc(D)\mid x_1,\ldots,x_n\in K\}$ is a subbasis of $\Oversloc(D)$, by Lemma \ref{lemma:YX-cons} if $\Oversloc(D)$ is spectral then it is also proconstructible in $\Over(D)$.
\end{proof}

We are now tasked to study the spectrality of $\Oversloc(D)$. To this end, we use spectral semistar operations; more precisely, we use the fact that there is a map
\begin{equation*}
\begin{aligned}
\pi\colon\inssemispectral(D) & \longrightarrow \Oversloc(D)\\
\star & \longmapsto D^\star
\end{aligned}
\end{equation*}
that is continuous \cite[Proposition 3.2(2)]{surveygraz} and surjective (by definition of $\Oversloc(D)$). We shall use the following topological lemma.
\begin{lemma}\label{lemma:surj-Xspectral}
Let $\phi:X\longrightarrow Y$ be a continuous surjective map between two topological spaces. Suppose that:
\begin{enumerate}[(a)]
\item $X$ is spectral;
\item $Y$ is $T_0$;
\item there is a subbasis $\mathcal{C}$ of $Y$ such that, for every $C\in\mathcal{C}$, $\phi^{-1}(C)$ is compact.
\end{enumerate}
Then, $Y$ is a spectral space and $\phi$ is a spectral map.
\end{lemma}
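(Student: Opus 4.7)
The plan is to verify Hochster's topological criterion: to show $Y$ is spectral I need it to be $T_0$ (given), compact, to admit a basis of compact open subsets closed under finite intersection, and to be sober. Compactness is immediate since $Y=\phi(X)$ is the continuous image of the spectral (hence compact) space $X$. For the basis, I would take the collection $\mathcal{B}$ of finite intersections of members of $\mathcal{C}$, which is closed under finite intersection by construction. Each $C\in\mathcal{C}$ is open in $Y$, so $\phi^{-1}(C)$ is open in $X$ and compact by hypothesis, hence compact open; since the compact opens form a sublattice in the spectral space $X$, $\phi^{-1}(B)$ is compact open in $X$ for every $B\in\mathcal{B}$, and surjectivity gives $B=\phi(\phi^{-1}(B))$, which is therefore compact in $Y$.

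The hard part is sobriety. Given an irreducible closed $F\subseteq Y$, I would consider the family $\mathcal{W}$ of $X$-closed subsets $W\subseteq\phi^{-1}(F)$ with $\overline{\phi(W)}=F$, ordered by reverse inclusion, and extract a minimal element via Zorn's lemma. The non-routine check is that, for a chain $\{W_\alpha\}$ in $\mathcal{W}$, the intersection $W:=\bigcap_\alpha W_\alpha$ still satisfies $\overline{\phi(W)}=F$. Here I would pass to the constructible topology: if some basic $B\in\mathcal{B}$ were to meet $F$ but be disjoint from $\phi(W)$, then $\{W_\alpha\cap\phi^{-1}(B)\}$ would be a descending chain of nonempty closed sets in $X^{\cons}$ (each $W_\alpha$ is $X$-closed hence $X^{\cons}$-closed, and $\phi^{-1}(B)$ is compact open in $X$, hence $X^{\cons}$-clopen), so the compactness of $X^{\cons}$ would force $W\cap\phi^{-1}(B)\neq\emptyset$, a contradiction. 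Once a minimal $W^\ast$ is in hand, the irreducibility of $F$ forces $W^\ast$ to be irreducible in $X$: any decomposition $W^\ast=W_1\cup W_2$ into proper closed subsets would yield $F=\overline{\phi(W_1)}\cup\overline{\phi(W_2)}$, so one of the $W_i$ would already lie in $\mathcal{W}$, contradicting minimality. Since $X$ is sober, $W^\ast=\overline{\{x\}}$ for some $x\in X$, and continuity together with $W^\ast\subseteq\phi^{-1}(F)$ gives $F=\overline{\phi(W^\ast)}=\overline{\{\phi(x)\}}$.

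For the second assertion, I would observe that every compact open $V\subseteq Y$ is a finite union of members of $\mathcal{B}$ (by compactness of $V$ and the fact that $\mathcal{B}$ is a basis), so $\phi^{-1}(V)$ is a finite union of compact opens of $X$ and hence compact open in $X$. The main obstacle in the whole argument is clearly the sobriety step: since $\phi$ is not assumed to be closed, an irreducible closed subset $F\subseteq Y$ does not automatically arise as the image of an irreducible closed subset of $X$, and one is forced to cut $\phi^{-1}(F)$ down by a Zorn-type argument that imports its compactness from the Hausdorff refinement $X^{\cons}$.
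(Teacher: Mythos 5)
Your proof is correct, and it takes a genuinely more self-contained route than the paper does. The paper's argument is a two-step reduction: it first shows (as you do) that finite intersections of members of $\mathcal{C}$ form a basis of compact open subsets of $Y$, closed under finite intersection, with compact preimages under $\phi$; it then invokes Proposition~9 of Dobbs--Fontana \cite{fontana_krr-abRs} as a black box to conclude. You instead re-derive that black box by verifying Hochster's axioms directly, and the substance of your contribution is the sobriety step: the Zorn-type argument on the family $\mathcal{W}$ of closed $W\subseteq\phi^{-1}(F)$ with $\overline{\phi(W)}=F$, where the nonemptiness of chain intersections is imported from the compactness of $X^{\cons}$ (using that $X$-closed sets are $X^{\cons}$-closed and compact opens are $X^{\cons}$-clopen). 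That is essentially the proof hidden inside the cited proposition, so in effect you unpack the reference rather than appeal to it. The trade-off: the paper's version is shorter and modular, leaning on the literature; yours is longer but explicit about where the topological difficulty lies (that $\phi$ need not be closed, so generic points of $F$ must be produced by shrinking $\phi^{-1}(F)$). Both are complete; your version would be preferable in an expository context, the paper's in a research paper where Dobbs--Fontana is already in the bibliography.

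One small point worth spelling out, which you leave implicit: $\mathcal{W}$ is nonempty because $\phi^{-1}(F)$ itself lies in it (surjectivity gives $\phi(\phi^{-1}(F))=F$), so Zorn's lemma actually applies. Also, after extracting the generic point $x$ of $W^\ast$, uniqueness of the generic point of $F$ follows from the $T_0$ hypothesis on $Y$, which is the only place that hypothesis is used in the sobriety argument.
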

\begin{proof}
Let $\Omega:=O_1\cap\ldots\cap O_m$ be a finite intersection of elements of $\mathcal{C}$. Then, $\phi^{-1}(\Omega)=\bigcap_i\phi^{-1}(O_i)$ is compact, since $X$ is spectral and each $\phi^{-1}(O_i)$ is compact by hypothesis; moreover, since $\phi$ is surjective, also $\Omega_i=\phi(\phi^{-1}(\Omega))$ is compact. Therefore, the set $\mathcal{C}_0$ of finite intersections of elements of $\mathcal{C}$ is a basis of compact subsets. If now $\Omega'$ is any open and compact subset of $Y$, then $\Omega$ is a finite union of elements of $\mathcal{C}_0$, and thus $\phi^{-1}(\Omega')$ is also compact.

The claim now follows from \cite[Proposition 9]{fontana_krr-abRs}.
\end{proof}

\begin{prop}\label{prop:spettrali-Oversloc}
Let $D$ be an integral domain. If $\inssemispectral(D)$ is a spectral space, then so is $\Oversloc(D)$.
\end{prop}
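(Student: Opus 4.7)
My plan is to apply Lemma \ref{lemma:surj-Xspectral} to the continuous surjective map $\pi\colon\inssemispectral(D)\to\Oversloc(D)$ sending $\star\mapsto D^\star$. Hypothesis (a) of the lemma is exactly the assumption of the proposition, and hypothesis (b) is immediate because $\Oversloc(D)$ inherits its topology from the spectral (hence $T_0$) space $\Over(D)$. So the real content is in verifying condition (c): producing a subbasis of $\Oversloc(D)$ whose elements pull back under $\pi$ to compact subsets of $\inssemispectral(D)$.

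\textbf{Choice of subbasis and preimage computation.} I would take as subbasis the natural one inherited from $\Over(D)$, namely
\[
\mathcal{C}:=\{\B(x)\cap\Oversloc(D)\mid x\in K\}.
\]
For each $x\in K$, axiom (4) of a semistar operation gives $(x^{-1}D)^\star=x^{-1}D^\star$, so $x\in D^\star$ if and only if $1\in(x^{-1}D)^\star$. Hence
\[
\pi^{-1}\bigl(\B(x)\cap\Oversloc(D)\bigr) \;=\; V_{x^{-1}D}\cap\inssemispectral(D),
\]
which is a basic open set of $\inssemispectral(D)$ since $V_{x^{-1}D}$ is a basic open set of $\inssemistar(D)$.

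\textbf{Main step and main obstacle.} The heart of the proof, and the step I expect to require the most care, is to verify that $V_{x^{-1}D}\cap\inssemispectral(D)$ is compact in $\inssemispectral(D)$. Because $x^{-1}D$ is a principal, hence finitely generated, fractional ideal, this should follow from the general principle that on the spectral space $\inssemispectral(D)$ the basic open sets $V_I\cap\inssemispectral(D)$ with $I$ finitely generated are open \emph{and} compact, equivalently clopen in the constructible topology; concretely, the argument amounts to producing a description of the complement of $V_I$ stable under ultrafilter limits of semistar operations. Once this compactness is in hand, all three hypotheses of Lemma \ref{lemma:surj-Xspectral} are met, and we conclude that $\Oversloc(D)$ is a spectral space and that $\pi$ is a spectral map.
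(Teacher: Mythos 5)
Your framework---the map $\pi\colon\inssemispectral(D)\to\Oversloc(D)$ together with Lemma~\ref{lemma:surj-Xspectral}, the subbasis $\{\B(x)\cap\Oversloc(D)\mid x\in K\}$, and the preimage computation $\pi^{-1}(\B(x)\cap\Oversloc(D))=V_{x^{-1}D}\cap\inssemispectral(D)$---is exactly the one the paper uses. But the step you yourself flag as the heart of the proof, namely the compactness of this preimage, is left unproved: you appeal to an unstated ``general principle'' that $V_I\cap\inssemispectral(D)$ is compact whenever $I$ is finitely generated and then sketch an ultrafilter-limit strategy without carrying it out. That principle is neither established nor cited in the paper, and since this compactness is precisely the content of the proposition, the proposal as written has a genuine gap.

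The gap has a much more elementary fix than an ultrafilter argument. Each basic open set $V_I$ of $\inssemistar(D)$ is an up-set for the pointwise order on semistar operations (if $\star_0\leq\star$ and $1\in I^{\star_0}$ then $1\in I^\star$), so any subset possessing a minimum element is automatically compact. Now use that spectral operations distribute over finite intersections and that $1\in D^\star$ always, to rewrite $V_{x^{-1}D}\cap\inssemispectral(D)=V_{x^{-1}D\cap D}\cap\inssemispectral(D)=V_{(D:_Dx)}\cap\inssemispectral(D)$. This set has the explicit minimum $s_{\D((D:_Dx))}$: enlarging $\Delta$ makes $s_\Delta$ smaller in the order, and $1\in(D:_Dx)^{s_\Delta}$ holds exactly when $\Delta\subseteq\D((D:_Dx))$. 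Hence the preimage is compact and Lemma~\ref{lemma:surj-Xspectral} applies; no analysis of ultrafilter limits of semistar operations is needed.
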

\begin{proof}
Let $\mathscr{B}:=\{\B(x)\cap\Oversloc(D)\mid x\in K\}$ be the canonical subbasis of $\Oversloc(D)$. Then,
\begin{equation*}
\begin{array}{rcl}
\pi^{-1}(\B(x)\cap\Oversloc(D)) & = & \{\star\in\inssemispectral(D)\mid x\in D^\star\}=\\
 & = & \{\star\in\inssemispectral(D)\mid 1\in x^{-1}D^\star\}=\\
 & = & \{\star\in\inssemispectral(D)\mid 1\in (x^{-1}D)^\star\}=\\
 & = & \{\star\in\inssemispectral(D)\mid 1\in (x^{-1}D\cap D)^\star\}=\\
 & = & V_{x^{-1}D\cap D}\cap\inssemispectral(D)=V_{(D:_D x)}\cap\inssemispectral(D).
\end{array}
\end{equation*}
However, $V_{(D:_D x)}\cap\inssemispectral(D)$ is compact since it has a minimum (explicitly, $s_{\D((D:_D x))}$). Hence, the map $\pi:\inssemispectral(D)\longrightarrow\Oversloc(D)$ satisfies the hypothesis of Lemma \ref{lemma:surj-Xspectral}, and thus $\Oversloc(D)$ is a spectral space.
\end{proof}

However, $\inssemispectral(D)$ is not, in general, a spectral space. To avoid this problem, we restrict $\pi$ to the space $\inssemispectraltf(D)$ (which is always spectral \cite[Theorem 4.6]{spettrali-eab}), obtaining the map $\pi_s:\inssemispectraltf(D)\longrightarrow\Oversloc(D)$; analogously to the previous proof, we need to show that $\pi_s$ is surjective and that $\pi_s^{-1}(\B(x)\cap\Oversloc(D))$ is compact. We claim that $D$ being rad-colon coherent is a sufficient condition for this to happen; we need a lemma.

\begin{lemma}\label{lemma:starf-starw}
Let $D$ be an integral domain, and let $\star$ be a spectral semistar operation on $D$.
\begin{enumerate}[(a)]
\item\label{lemma:starf-starw:F} If $\D(F\cap D)$ is a compact subset of $\Spec(D)$ for every finitely generated fractional ideal $F$ of $D$, then $\star_f=\spectral{\star}$.
\item\label{lemma:starf-starw:rc} If $D$ is rad-colon coherent, then $D^{\star_f}=D^{\spectral{\star}}$.
\end{enumerate}
\end{lemma}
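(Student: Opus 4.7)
The plan is to reduce both parts to proving $\star_f\le\spectral{\star}$ at the relevant submodules, since the reverse inequality $\spectral{\star}\le\star_f$ holds automatically: if $x\in(I:E)$ with $E$ finitely generated and $1\in E^\star$, then $xE$ is a finitely generated submodule of $I$, and $x=x\cdot1\in x\cdot E^\star=(xE)^\star\subseteq I^{\star_f}$. Writing $\star=s_\Delta$, the spectral hypothesis will let me convert the statement $1\in E^\star$ into the geometric condition $\Delta\subseteq\D(E)$, which is what links the argument to the compactness hypothesis.

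For part \ref{lemma:starf-starw:F}, I will take $x\in I^{\star_f}$ and choose a finitely generated $F\subseteq I$ with $x\in F^\star=\bigcap_{P\in\Delta}FD_P$. Clearing denominators at each $P\in\Delta$ produces an $s\in D\setminus P$ with $sx\in F$, so $(F:_Dx)\not\subseteq P$; in other words, $\Delta\subseteq\D((F:_Dx))$. The key identification is $(F:_Dx)=x^{-1}F\cap D$: since $x^{-1}F$ is a finitely generated fractional ideal, the standing hypothesis of (a) gives that $\D((F:_Dx))$ is compact in $\Spec(D)$.

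Now $\D((F:_Dx))=\bigcup\{\D(E)\mid E\subseteq(F:_Dx)\text{ finitely generated}\}$ is an open cover, so by compactness I extract a finite subcover and let $E$ be the sum of the finitely many chosen ideals; then $E\subseteq(F:_Dx)$ is finitely generated and $\D(E)=\D((F:_Dx))\supseteq\Delta$. The latter means $ED_P=D_P$ for every $P\in\Delta$, hence $1\in E^\star$; combined with $xE\subseteq F\subseteq I$, this shows $x\in(I:E)\subseteq I^{\spectral{\star}}$, completing (a).

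For part \ref{lemma:starf-starw:rc}, I specialize the same argument to $I=D$: from $x\in F^\star\subseteq D^\star$ I again obtain $\Delta\subseteq\D((D:_Dx))$, and here the rad-colon coherence of $D$ is precisely the statement that $\D((D:_Dx))$ is compact. The extraction of a finitely generated $E\subseteq(D:_Dx)$ with $\D(E)\supseteq\Delta$, and hence $1\in E^\star$ together with $xE\subseteq D$, then goes through unchanged to give $x\in D^{\spectral{\star}}$. The only real obstacle is spotting the identity $(F:_Dx)=x^{-1}F\cap D$ used in (a), which is what turns the hypothesis on finitely generated fractional ideals into the compactness required by the covering argument; once this is in hand, both statements collapse to essentially the same computation.
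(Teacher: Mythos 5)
Your proof is correct and follows essentially the same route as the paper's: both hinge on the identification $(F:_D x)=x^{-1}F\cap D$ to invoke the compactness hypothesis, and both then extract a finitely generated ideal $E\subseteq(F:_D x)$ with $\D(E)=\D((F:_D x))$ to conclude $1\in E^\star$ and $x\in(I:E)$. The only cosmetic differences are that the paper first reduces to finitely generated $F$ and produces the ideal $E$ via a finitely generated $J$ with $\rad(J)=\rad(I)$ (passing to a power), whereas you extract it from an explicit open cover and sum; and the paper verifies $1\in I^\star$ directly from spectrality of $\star$, whereas you phrase the same fact as $\Delta\subseteq\D((F:_D x))$ by clearing denominators.
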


Note that the equality $\star_f=\spectral{\star}$ may actually fail; see \cite[p.2466]{anderson_two_2000}.

\begin{proof}
\ref{lemma:starf-starw:F} Since $\star_f$ and $\spectral{\star}$ are of finite type, it is enough to show that $F^{\star_f}=F^{\spectral{\star}}$ if $F$ is finitely generated. The containment $F^{\spectral{\star}}\subseteq F^{\star_f}$ always holds; suppose $x\in F^{\star_f}$. Then, since $F^{\star_f}\subseteq F^\star$, we have $x\in F^\star$. Consider $I:=x^{-1}F\cap D$. Then, $xI=F\cap xD\subseteq F$. Moreover, 
\begin{equation*}
I^\star=(x^{-1}F\cap D)^\star=x^{-1}F^\star\cap D^\star
\end{equation*}
since $\star$ is spectral, and thus $1\in I^\star$. Since $x^{-1}F$ is finitely generated, by hypothesis $\D(I)$ is compact, and thus there is a finitely generated ideal $J$ of $D$ such that $\rad(I)=\rad(J)$; passing, if needed, to a power of $J$, we can suppose $J\subseteq I$, so that $xJ\subseteq xI\subseteq F$. For any spectral operation $\sharp$, $\rad(A)=\rad(B)$ implies that $1\in A^\sharp$ if and only if $1\in B^\sharp$; therefore, $1\in J^\star$, and thus $x\in(F:J)\subseteq F^{\spectral{\star}}$, and $x\in F^{\spectral{\star}}$. Hence, $\star_f=\spectral{\star}$, as requested.

\ref{lemma:starf-starw:rc} It is enough to repeat the proof of the previous point by using $F=D$, and noting that $\D(x^{-1}D\cap D)$ is compact since $D$ is rad-colon coherent.
\end{proof}

\begin{teor}\label{teor:radcolon-sloc}
Let $D$ be an integral domain. If $D$ is rad-colon coherent, then $\Oversloc(D)$ is a spectral space.
\end{teor}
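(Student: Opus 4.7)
The plan is to invoke Lemma~\ref{lemma:surj-Xspectral} with $\phi=\pi_s:\inssemispectraltf(D)\to\Oversloc(D)$, the restriction of the continuous surjective map $\pi$ introduced in Proposition~\ref{prop:spettrali-Oversloc}. The source is spectral by \cite[Theorem 4.6]{spettrali-eab}, the target $\Oversloc(D)$ inherits the $T_0$ axiom from $\Over(D)$, and $\pi_s$ is continuous as a restriction. I would take the subbasis $\mathcal{C}=\{\B(x)\cap\Oversloc(D)\mid x\in K\}$, so that what remains to verify is that $\pi_s$ is surjective and that $\pi_s^{-1}(\B(x)\cap\Oversloc(D))$ is compact for every $x\in K$.

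For surjectivity, given $T\in\Oversloc(D)$ I would set $\Delta:=\{P\in\Spec(D)\mid D_P\supseteq T\}$, so that $T=D^{s_\Delta}$, and claim that $T=D^{\spectral{s_\Delta}}$; since $\spectral{s_\Delta}\in\inssemispectraltf(D)$, this yields surjectivity. By Lemma~\ref{lemma:starf-starw}\ref{lemma:starf-starw:rc}, it suffices to prove $T=D^{(s_\Delta)_f}$. The containment $D^{(s_\Delta)_f}\subseteq D^{s_\Delta}=T$ is immediate. For the reverse inclusion, given $x\in T$ one has $(D:_Dx)\not\subseteq P$ for every $P\in\Delta$; by rad-colon coherence I can pick a finitely generated $E\subseteq D$ with $\rad(E)=\rad((D:_Dx))$, so that $E\not\subseteq P$, and hence $ED_P=D_P$, for every $P\in\Delta$. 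Then $x\in\bigcap_{P\in\Delta}ED_P=E^{s_\Delta}\subseteq D^{(s_\Delta)_f}$.

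For the compactness of the preimage, the same computation carried out in the proof of Proposition~\ref{prop:spettrali-Oversloc} yields $\pi_s^{-1}(\B(x)\cap\Oversloc(D))=V_{(D:_Dx)}\cap\inssemispectraltf(D)$. Under rad-colon coherence $\D((D:_Dx))$ is compact, so $s_{\D((D:_Dx))}$ is of finite type and therefore belongs to this set; it is in fact a minimum with respect to the pointwise order (which agrees with the specialization order on $\inssemistar(D)$), since any finite-type spectral $s_{\Delta'}\in V_{(D:_Dx)}$ satisfies $\Delta'\subseteq\D((D:_Dx))$ and hence $s_{\Delta'}\geq s_{\D((D:_Dx))}$. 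A subset of a topological space possessing a specialization-minimum is compact, because any open covering it must have some piece containing that minimum, and that piece then contains the whole subset.

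The main obstacle is the surjectivity of $\pi_s$: while $\pi$ is tautologically surjective on all spectral operations, it is not evident that every sublocalization is produced by a \emph{finite-type} spectral operation. Lemma~\ref{lemma:starf-starw}\ref{lemma:starf-starw:rc} is precisely the tool that bridges the gap, letting us replace $(s_\Delta)_f$ (finite-type but a priori not spectral) by $\spectral{s_\Delta}$ (finite-type and spectral) in the representation of $T$, and it is here that the rad-colon coherence hypothesis is used in an essential way.
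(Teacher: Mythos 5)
Your proof is correct and takes essentially the same route as the paper: the map $\pi_s:\inssemistartf\cap\inssemispectral(D)\to\Oversloc(D)$, Lemma~\ref{lemma:surj-Xspectral}, surjectivity via Lemma~\ref{lemma:starf-starw}\ref{lemma:starf-starw:rc}, and compactness of the preimages by exhibiting the minimum $s_{\D((D:_Dx))}$. The only (harmless) inefficiency is in the surjectivity step: the containment $T=D^{s_\Delta}\subseteq D^{(s_\Delta)_f}$ holds trivially, since $D$ is a finitely generated $D$-module and one may take $J=D$ in the definition of $(s_\Delta)_f$, so the passage through $E$ and rad-colon coherence there is redundant — the hypothesis is needed only to apply Lemma~\ref{lemma:starf-starw}\ref{lemma:starf-starw:rc}, exactly as you observe in your closing paragraph.
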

\begin{proof}
Suppose $D$ is rad-colon coherent. If $T\in\Oversloc(D)$, then there is a $\sharp\in\inssemispectral(D)$ such that $T=D^\sharp$; since $D$ is $D$-finitely generated, moreover, we have $D^\sharp=D^{\sharp_f}$. By Lemma \ref{lemma:starf-starw}\ref{lemma:starf-starw:rc}, $D^{\sharp_f}=D^{\spectral{\sharp}}$; but $\spectral{\sharp}\in\inssemispectraltf(D)$, and thus $\pi_s$ is surjective.

As in the proof of Proposition \ref{prop:spettrali-Oversloc},
\begin{equation*}
\pi_s^{-1}(\B(x)\cap\Oversloc(D))=V_{(D:_D x)}\cap\inssemispectraltf(D),
\end{equation*}
which is compact since it has a minimum ($s_{\D((D:_Dx))}$). Since $\inssemispectraltf(D)$ is a spectral space \cite[Theorem 4.6]{spettrali-eab}, by  Lemma \ref{lemma:surj-Xspectral} $\Oversloc(D)$ is spectral.
\end{proof}

\begin{cor}
If $D$ is a domain with Noetherian spectrum (in particular, if $D$ is Noetherian) then $\Oversloc(D)$ is a spectral space.
\end{cor}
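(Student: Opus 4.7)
The plan is simply to reduce this corollary to Theorem \ref{teor:radcolon-sloc} by verifying that a domain with Noetherian spectrum is rad-colon coherent. Once that is established, the conclusion is immediate from the theorem.

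To verify the rad-colon coherence, I would recall the standard topological fact that in a Noetherian topological space every subspace is compact (equivalently, every open subset is quasi-compact). Applying this to $\Spec(D)$, one gets in particular that $\D(J)$ is compact in $\Spec(D)$ for every ideal $J$ of $D$; in particular $\D((D:_D x))$ is compact for every $x\in K$, which is exactly the definition of rad-colon coherence.

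Having shown that $D$ is rad-colon coherent, Theorem \ref{teor:radcolon-sloc} directly yields that $\Oversloc(D)$ is a spectral space, completing the proof. The parenthetical case of $D$ Noetherian then follows since a Noetherian ring has Noetherian spectrum.

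There is no real obstacle here; the only point to be careful about is to cite the correct topological fact (open subsets of a Noetherian space are compact) rather than trying to exhibit, for each $x$, an explicit finitely generated ideal with the same radical as $(D:_D x)$. The latter could in principle be done using the fact that ideals in $D$ satisfy the ascending chain condition on radicals, but invoking compactness of $\D((D:_D x))$ directly is cleaner and matches the formulation of rad-colon coherence given in the paper.
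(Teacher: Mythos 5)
Your proof is correct and matches the paper's intent exactly: the paper notes immediately after the definition of rad-colon coherence that domains with Noetherian spectrum satisfy it, and the corollary is then immediate from Theorem \ref{teor:radcolon-sloc}. The topological justification you give (open subsets of a Noetherian space are compact) is the standard reason and is the right level of detail.
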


Note that it is not hard to see that, if $\D(J)$ is not compact in $\Spec(D)$, then $V_J\cap\inssemispectraltf(D)$ is actually not compact; therefore, the proof of Theorem \ref{teor:radcolon-sloc} cannot easily be further generalized.

Another natural question is whether $\pi_s$ is injective; however, this is usually false. For example, if $\Delta$ is any subset of $\Spec(D)$ containing the $t$-spectrum, then $\pi_s(s_\Delta)=D$. Thus, $\pi_s$ does not give a way to ``represent'' $\Oversloc(D)$ like $\Spec(D)$ does for $\Localiz(D)$ and $\scal(D)$ for $\Overqr(D)$. To circumvent this problem, we shall use, instead of the whole spectrum, the $t$-spectrum; note that $\qspec{t}(D)$ is a proconstructible subspace of $\Spec(D)$ \cite[Proposition 2.5]{intD-PvMD}, so a spectral space, and thus the space $\xcal(\qspec{t}(D))$ is defined and spectral.

Consider the map
\begin{equation*}
\begin{aligned}
\pi_t\colon\xcal(\qspec{t}(D)) & \longrightarrow\Oversloc(D) \\
\Delta & \longmapsto D^{s_\Delta}.
\end{aligned}
\end{equation*}
Note that, if $D$ is rad-colon coherent, $\pi_t$ is continuous and spectral, since it is the composition of the spectral inclusion $\xcal(\qspec{t}(D))\hookrightarrow\xcal(D)$ (\cite[Proposition 4.1]{Xx}, noting the inclusion $\qspec{t}(S)\hookrightarrow\Spec(D)$ is spectral since $\qspec{t}(D)$ is proconstructible), the homeomorphism $\xcal(D)\longrightarrow\inssemispectraltf(D)$ and the map $\pi_s:\inssemispectraltf(D)\longrightarrow\Over(D)$ (which is spectral by Lemma \ref{lemma:surj-Xspectral} and the proof of Theorem \ref{teor:radcolon-sloc}).

We first show that, using $\pi_t$, we do not lose anything.
\begin{prop}\label{prop:imm-pit}
Let $D$ be an integral domain. Then:
\begin{enumerate}[(a)]
\item\label{prop:imm-pit:intersez} for any $\Delta,\Lambda\in\xcal(D)$, if $\Delta\cap\qspec{t}(D)=\Lambda\cap\qspec{t}(D)$ then $\pi_s(s_\Delta)=\pi_s(s_\Lambda)$;
\item\label{prop:imm-pit:surj} $\pi_s(\inssemispectraltf(D))=\pi_t(\xcal(\qspec{t}(D)))$.
\end{enumerate}
\end{prop}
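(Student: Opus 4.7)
The plan is to reduce both parts to the single identity
\[ D^{s_\Delta} = \bigcap_{P \in \Delta \cap \qspec{t}(D)} D_P, \]
valid for every $\Delta \in \xcal(D)$. Once this is in hand, part (a) follows instantly: the right-hand side depends only on $\Delta \cap \qspec{t}(D)$, so if two elements of $\xcal(D)$ have the same intersection with $\qspec{t}(D)$, their images under $\pi_s$ coincide.

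To prove the identity, the nontrivial inclusion is $\supseteq$. I would fix $x$ in the right-hand intersection, reduce to the case $x \in K \setminus D$ (for $x \in D$ the claim is trivial), and argue that $x \in D_Q$ for every $Q \in \Delta$. The crucial point is that $I := (D :_D x) = D \cap x^{-1}D$ is a nonzero \emph{divisorial} ideal of $D$, being the intersection of two divisorial fractional ideals; in particular $I$ is a $t$-ideal. Now I invoke the classical fact that every minimal prime of a nonzero $t$-ideal is a $t$-prime: if $x \notin D_Q$ for some $Q \in \Delta$, then $I \subseteq Q$, so $I$ has a minimal prime $P_0 \subseteq Q$ lying in $\qspec{t}(D)$; inverse-closedness of $\Delta$ (i.e., closedness under generalizations) puts $P_0 \in \Delta$, so $P_0 \in \Delta \cap \qspec{t}(D)$, and this contradicts $x \in D_{P_0}$. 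Note that $\Delta \cap \qspec{t}(D)$ is automatically nonempty: $(0)\subseteq P$ for every $P$ forces $(0) \in \Delta$, and $(0)$ is always a $t$-prime.

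For part (b) I show the two inclusions separately. Given $\star = s_\Delta \in \inssemispectraltf(D)$ with $\Delta \in \xcal(D)$, I set $\Lambda := \Delta \cap \qspec{t}(D)$: this set is nonempty, closed under generalizations within $\qspec{t}(D)$ (inherited from $\Delta$), and compact (being the intersection of two proconstructible subsets of $\Spec(D)$), so $\Lambda \in \xcal(\qspec{t}(D))$, and the identity yields $\pi_s(\star) = D^{s_\Lambda} = \pi_t(\Lambda)$. Conversely, any $\Lambda \in \xcal(\qspec{t}(D))$ is intrinsically compact as a topological space, hence compact when viewed in $\Spec(D)$, so $s_\Lambda$ is a spectral semistar operation of finite type and $\pi_t(\Lambda) = D^{s_\Lambda} = \pi_s(s_\Lambda)$.

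The main obstacle is the identity itself: it leans on two standard but nontrivial facts from multiplicative ideal theory — intersections of divisorial fractional ideals are divisorial, and minimal primes of $t$-ideals are $t$-primes — while everything else is topological bookkeeping about compactness, proconstructibility, and generalization-closedness.
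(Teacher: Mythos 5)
Your proof is correct and takes a genuinely different route from the paper's. Both arguments reduce everything to the identity $D^{s_\Delta}=\bigcap_{P\in\Delta\cap\qspec{t}(D)}D_P$, but they establish it differently. The paper works \emph{locally at each $P\in\Delta$}: it introduces the induced operation $t_P:ID_P\mapsto I^tD_P$ on $D_P$ and uses the fact that $D_P=\bigcap\{D_Q\mid Q\subseteq P,\,Q=Q^t\}$ (the intersection over the $t$-spectrum localized at $P$), then takes the intersection over $P\in\Delta$ and observes that the resulting index set is precisely $\Delta\cap\qspec{t}(D)$ because $\Delta$ is closed under generizations. Your proof instead argues \emph{elementwise}: for $x$ in the right-hand side with $x\notin D$, the conductor $(D:_D x)=D\cap x^{-1}D$ is divisorial, hence a $t$-ideal, and its minimal primes are $t$-primes; if $(D:_D x)$ were contained in some $Q\in\Delta$, a minimal prime $P_0\subseteq Q$ of $(D:_D x)$ would lie in $\Delta\cap\qspec{t}(D)$ (generization-closedness of $\Delta$) and exclude $x$ from $D_{P_0}$, a contradiction. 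Your approach is somewhat more elementary in that it avoids the $t_P$-machinery from the two-star-operation paper and relies only on two classical facts about the $t$-operation. The paper's approach, on the other hand, makes the structural point that each $D_P$ for $P\in\Delta$ is already recovered from the $t$-primes underneath it, which is what is reused in the proof of Proposition~\ref{prop:pit-inj-car}. Both proofs handle part~(b) identically once the identity is in hand. One very minor remark: you could streamline the surjectivity of $\pi_t$ onto $\pi_s(\inssemispectraltf(D))$ by simply noting, as the paper does, that the inclusion $\pi_t(\xcal(\qspec{t}(D)))\subseteq\pi_s(\inssemispectraltf(D))$ is immediate (any $\Lambda\in\xcal(\qspec{t}(D))$ is compact, so $s_\Lambda$ is already finite-type spectral), without re-deriving the identity in that direction.
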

\begin{proof}
It is enough to show that, for every $\Delta\in\xcal(D)$, $\pi_s(\Delta)=\pi_s(\Delta_0)$, where $\Delta_0:=\Delta\cap\qspec{t}(D)$. Let $T:=\pi_s(s_\Delta)$; then, since $\Delta$ is a proconstructible subset of $\Spec(D)$, also $\Delta_0$ is proconstructible. In particular, $\Delta_0$ is compact and closed by generizations relative to $\qspec{t}(D)$, and so it belongs to $\xcal(\qspec{t}(D))$. We claim that $T=\pi_t(\Delta_0)$.

Indeed, let $P\in\Delta$. Then, $t_P:ID_P\mapsto I^tD_P$ is a star operation of finite type on $D_P$ (see \cite{twostar}), and $QD_P$ is a maximal $t_P$-ideal if and only if $Q$ is maximal among the $t$-prime ideals contained in $P$. Hence, $D_P=\bigcap\{D_Q\mid Q\subseteq P,Q=Q^t\}$, and
\begin{equation*}
T=\bigcap\{D_Q\mid Q=Q^t,Q\subseteq P\text{~for some~}P\in\Delta\}.
\end{equation*}
The set of primes on the right hand side is exactly $\Delta_0$. Therefore, $T=\pi_t(\Delta_0)\in\pi_t(\xcal(\qspec{t}(D)))$, and \ref{prop:imm-pit:intersez} is proved.

Moreover, this also shows that $\pi_s(\inssemispectraltf(D))\subseteq\pi_t(\xcal(\qspec{t}(D)))$; since the other inclusion is obvious, \ref{prop:imm-pit:surj} holds.
\end{proof}

The $t$-spectrum is much less redundant than $\Spec(D)$: indeed, if $D=\bigcap\{D_P\mid P\in\Delta\}$ for some compact $\Delta\subseteq\qspec{t}(D)$, then $\Delta$ must contain the $t$-maximal ideals, since $t$ is the biggest star operation of finite type. In general, $\pi_t$ is not always injective; however, when this happens then $\pi_t$ is also a homeomorphism, as the next proposition shows.
\begin{prop}\label{prop:pit-Omef}
Let $D$ be a rad-colon coherent domain. Then, the following are equivalent:
\begin{enumerate}[(i)]
\item\label{prop:pit-Omef:Omef} $\pi_t$ is a homeomorphism;
\item\label{prop:pit-Omef:inj} $\pi_t$ is injective;
\item\label{prop:pit-Omef:intersez} if $\Delta,\Lambda\in\xcal(D)$ are such that $\pi_s(s_\Delta)=\pi_s(s_\Lambda)$, then $\Delta\cap\qspec{t}(D)=\Lambda\cap\qspec{t}(D)$.
\end{enumerate}
\end{prop}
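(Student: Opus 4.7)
My plan is to prove the cycle (i) $\Rightarrow$ (ii) $\Rightarrow$ (iii) $\Rightarrow$ (i). The implication (i) $\Rightarrow$ (ii) is trivial. For (ii) $\Rightarrow$ (iii), I would invoke the identity $\pi_s(s_\Delta)=\pi_t(\Delta\cap\qspec{t}(D))$ for $\Delta\in\xcal(D)$ established inside the proof of Proposition \ref{prop:imm-pit}: the hypothesis $\pi_s(s_\Delta)=\pi_s(s_\Lambda)$ then rewrites as $\pi_t(\Delta\cap\qspec{t}(D))=\pi_t(\Lambda\cap\qspec{t}(D))$, and injectivity of $\pi_t$ yields (iii).

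The substantive direction is (iii) $\Rightarrow$ (i). I would first establish bijectivity of $\pi_t$. Surjectivity follows from Proposition \ref{prop:imm-pit}\ref{prop:imm-pit:surj}, which identifies the image of $\pi_t$ with $\pi_s(\inssemispectraltf(D))$, combined with the surjectivity of $\pi_s$ onto $\Oversloc(D)$ shown inside the proof of Theorem \ref{teor:radcolon-sloc} under rad-colon coherence. For injectivity, given $\Delta_1,\Delta_2\in\xcal(\qspec{t}(D))$ with $\pi_t(\Delta_1)=\pi_t(\Delta_2)$, I would view them as elements of $\xcal(D)$ through the spectral inclusion and apply (iii) to obtain $\Delta_1\cap\qspec{t}(D)=\Delta_2\cap\qspec{t}(D)$; since each $\Delta_i$ is already contained in $\qspec{t}(D)$, this is the desired equality $\Delta_1=\Delta_2$.

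The main subtlety is then upgrading the continuous bijective spectral map $\pi_t$ to a homeomorphism in the Zariski topology. Since $\pi_t$ is spectral (as observed just before the proposition) it is continuous with respect to the constructible topologies as well, and a continuous bijection between the compact Hausdorff spaces $\xcal(\qspec{t}(D))^\cons$ and $\Oversloc(D)^\cons$ is automatically a homeomorphism. In a spectral space, the Zariski-closed subsets are exactly those that are constructibly closed and stable under specialization; continuity of $\pi_t$ already takes care of one direction, so it suffices to verify that $\pi_t^{-1}$ preserves specialization. A direct inspection identifies specialization in $\xcal(\qspec{t}(D))$ with reverse inclusion and specialization in $\Oversloc(D)$ with ring inclusion. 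Given $\pi_t(\Delta_1)\subseteq\pi_t(\Delta_2)$, I would set $\Delta:=\Delta_1\cup\Delta_2$, which remains in $\xcal(\qspec{t}(D))$ because compactness and closure under generalization are both stable under finite unions; then
\begin{equation*}
\pi_t(\Delta)=\pi_t(\Delta_1)\cap\pi_t(\Delta_2)=\pi_t(\Delta_1),
\end{equation*}
so injectivity forces $\Delta=\Delta_1$, i.e.\ $\Delta_2\subseteq\Delta_1$. The main obstacle is precisely this final passage: recognizing that constructible data together with the specialization order determine the Zariski topology on a spectral space, so that the union trick combined with injectivity is enough to bootstrap the constructible homeomorphism to a Zariski one.
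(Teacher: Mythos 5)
Your proof is correct and follows essentially the same route as the paper: the trivial implication, the identification coming from Proposition~\ref{prop:imm-pit} for the equivalence of~(ii) and~(iii), bijectivity via Theorem~\ref{teor:radcolon-sloc}, and the union trick to establish that $\pi_t$ is an order isomorphism. The only difference is that where the paper simply invokes \cite[Proposition~15]{hochster_spectral} to pass from a spectral order-isomorphic bijection to a homeomorphism, you unpack that citation by going through the constructible topology and the characterization of Zariski-closed sets as the constructibly closed, specialization-stable ones.
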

\begin{proof}
The implication (i $\Longrightarrow$ ii) is obvious; the equivalence between \ref{prop:pit-Omef:inj} and \ref{prop:pit-Omef:intersez} follows from Proposition \ref{prop:imm-pit}. 

Suppose now that $\pi_t$ is injective; then, $\pi_t$ is bijective (since it is also surjective by Theorem \ref{teor:radcolon-sloc}, being $D$ rad-colon coherent), continuous and spectral. Clearly, if $\Delta\supseteq\Lambda$ then $\pi_t(\Delta)\subseteq\pi_t(\Lambda)$. Conversely, suppose $\pi_t(\Delta)\subseteq\pi_t(\Lambda)$: then, $T:=\bigcap\{D_P\mid P\in\Delta\}\subseteq\bigcap\{D_Q\mid Q\in\Lambda\}$, and thus $T\subseteq D_Q$ for every $Q\in\Lambda$. Hence, $\pi_t(\Delta)=\pi_t(\Delta\cup\Lambda)$, and by the injectivity of $\pi_t$ is must be $\Delta=\Delta\cup\Lambda$, i.e., $\Lambda\subseteq\Delta$. Therefore, $\pi_t$ is also an order isomorphism (in the order induced by the respective topologies of $\xcal(\qspec{t}(D))$ and $\Oversloc(D))$; by \cite[Proposition 15]{hochster_spectral}, $\pi_t$ is a homeomorphism.
\end{proof}

A prime ideal $P$ of $D$ is \emph{well-behaved} if $PD_P$ is $t$-closed in $D_P$ \cite{wellbehaved}; this is equivalent to $D_P$ being a DW-domain, i.e., to the fact that, on $D_P$, the $w$-operation coincides with the identity (this follows from \cite[Proposition 2.2]{mimouni-DW}). A domain is called \emph{well-behaved} if every $t$-prime ideal is well-behaved; examples of well-behaved domains are Noetherian domains, Krull domains and domains where every $t$-prime ideal has height 1.
\begin{prop}\label{prop:pit-inj-car}
Let $D$ be an integral domain. Then, $D$ is well-behaved if and only if the map $\pi_t:\xcal(\qspec{t}(D))\longrightarrow\Oversloc(D)$ is injective.
\end{prop}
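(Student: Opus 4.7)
The plan is to reformulate injectivity of $\pi_t$ in a convenient way and then settle each implication.

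\emph{Reformulation.} We first observe that $\pi_t$ is injective if and only if, for every $\Delta\in\xcal(\qspec{t}(D))$ and every $P\in\qspec{t}(D)$ such that $\pi_t(\Delta)\subseteq D_P$, one has $P\in\Delta$. Indeed, if $\pi_t(\Delta)\subseteq D_P$ and $P\notin\Delta$, then $\Delta':=\Delta\cup\{P\}^{\downarrow}$ (where $\{P\}^{\downarrow}$ denotes the closure of $\{P\}$ in the inverse topology of $\qspec{t}(D)$) still lies in $\xcal(\qspec{t}(D))$ as the union of two inverse-closed subsets (so compact and generization-closed in $\qspec{t}(D)$), strictly contains $\Delta$, and satisfies $\pi_t(\Delta')=\pi_t(\Delta)\cap D_P=\pi_t(\Delta)$, contradicting injectivity. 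The converse is immediate by selecting any $P\in\Delta_1\setminus\Delta_2$ whenever distinct $\Delta_1,\Delta_2$ have the same image.

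\emph{Forward direction (well-behaved $\Rightarrow$ injective).} Assume $D$ is well-behaved and that $T:=\pi_t(\Delta)\subseteq D_P$ for some $\Delta\in\xcal(\qspec{t}(D))$ and $P\in\qspec{t}(D)$; we aim to show $P\in\Delta$. Set $M:=PD_P\cap T$; a direct computation (tracking denominators and using $D\subseteq T\subseteq D_P$) gives $M\cap D=P$ and $T_M=D_P$. Since $P$ is well-behaved, $PD_P=MT_M$ is a $t$-ideal of $T_M=D_P$, and by the standard fact that contractions of $t$-primes along a localization are $t$-primes, $M$ is itself a $t$-prime of $T$. The crux is then to show that every $t$-prime of the sublocalization $T=\bigcap_{Q\in\Delta}D_Q$ contracts to an element of $\Delta$: using that $\Delta$ is downward-closed and exploiting the description of $\Oversloc(D)$ via spectral semistar operations from Section \ref{sect:sloc}, one argues that the $t$-primes of $T$ arise as $QD_Q\cap T$ for $Q\in\Delta$, forcing $P=M\cap D\in\Delta$.

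\emph{Contrapositive direction (not well-behaved $\Rightarrow$ not injective).} Let $P\in\qspec{t}(D)$ satisfy $PD_P$ is not $t$-closed in $D_P$, so $PD_P\notin\mathrm{tMax}(D_P)$. For each $N\in\mathrm{tMax}(D_P)$ we then have $N\subsetneq PD_P$, and $Q_N:=N\cap D$ is a $t$-prime of $D$ strictly contained in $P$ with $(D_P)_N=D_{Q_N}$; hence $D_P=(D_P)^w=\bigcap_N D_{Q_N}$. Since $w$ on $D_P$ is of finite type, $\mathrm{tMax}(D_P)$ is compact, and transporting along the standard homeomorphism of $\qspec{t}(D_P)$ with $\{Q\in\qspec{t}(D):Q\subseteq P\}$, the set $\Sigma:=\{Q_N\}$ is compact in $\qspec{t}(D)$. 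Its inverse closure $Z$ in $\Spec(D)$, being a closed subset of the inverse-closed set $\{P\}^{\downarrow}\subseteq\Spec(D)$, is inverse-closed and hence proconstructible; intersecting with the proconstructible subspace $\qspec{t}(D)$ produces a proconstructible (so compact) set $\Delta:=Z\cap\qspec{t}(D)$ that is generization-closed within $\qspec{t}(D)$, i.e. $\Delta\in\xcal(\qspec{t}(D))$. By construction $\pi_t(\Delta)=D_P$ and $P\notin\Delta$ (since each $Q_N\subsetneq P$), so by the reformulation $\pi_t$ is not injective.

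\emph{Main obstacle.} The forward direction is the delicate part: the identification of the $t$-primes of a sublocalization $T=\bigcap_{Q\in\Delta}D_Q$ with elements of $\Delta$ is not formal and requires careful use of well-behavedness (together with the known bijection between $t$-primes of $D_Q$ and $t$-primes of $D$ contained in $Q$). The contrapositive, by contrast, reduces cleanly to the spectral-space machinery once one recognizes that the relevant set of primes sits inside the inverse-closed, compact set $\{P\}^{\downarrow}$, so that proconstructibility of $\qspec{t}(D)$ takes care of compactness.
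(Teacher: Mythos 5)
Your reformulation of injectivity (``$\pi_t(\Delta)\subseteq D_P$ forces $P\in\Delta$'') is correct and is a nice way to organize the argument; the justification via $\Delta\cup\{P\}^{\downarrow}$ is sound.

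The genuine gap is in the forward direction. You reduce everything to the assertion that every $t$-prime of $T=\bigcap_{Q\in\Delta}D_Q$ contracts into $\Delta$, and you yourself flag that this ``is not formal.'' As written it is not a proof: the spectral star operation $\sigma\colon I\mapsto\bigcap_{Q\in\Delta}ID_Q$ on $T$ satisfies $\sigma\leq w_T\leq t_T$, which only gives you $\qspec{t}(T)\subseteq\qspec{\sigma}(T)$ as inverse-closed sets, and extracting from this that $M$ itself is of the form $QD_Q\cap T$ with $Q\in\Delta$ requires extra work you have not supplied. The paper avoids this issue entirely and argues in the opposite direction: given $\Delta\neq\Lambda$ with $\pi_t(\Delta)=\pi_t(\Lambda)$ and $P\in\Delta\setminus\Lambda$, it uses compactness of $\{D_Q\mid Q\in\Lambda\}$ and the ``compact intersections'' results to write $D_P=\bigcap_{Q\in\Lambda}D_PD_Q$, then observes that $I\mapsto\bigcap_{Q\in\Lambda}ID_PD_Q$ restricts to a \emph{finite-type} star operation $\star'$ on $D_P$ with $(PD_P)^{\star'}=D_P$ (because $P\not\subseteq Q$ for $Q\in\Lambda$, since $\Lambda$ is generization-closed), which contradicts $PD_P$ being $t$-closed. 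This sidesteps any need to describe $\qspec{t}(T)$.

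There is also a smaller issue in the contrapositive direction: you assert that $\mathrm{tMax}(D_P)$ is compact ``since $w$ on $D_P$ is of finite type.'' This inference is not justified; what finite-typeness of $s_\Delta$ controls is the compactness of the generization-closure of $\Delta$, not of $\Delta$ itself, and the set of $t$-maximal ideals need not be compact. The fix is easy and is what the paper does: replace $\mathrm{tMax}(D_P)$ by $\qspec{t}(D_P)$, which is proconstructible (hence compact) by the cited result on $t$-spectra, and push forward along $\Spec(D_P)\to\Spec(D)$. With that replacement your construction of $\Delta$ with $\pi_t(\Delta)=D_P$ and $P\notin\Delta$ goes through; in the paper this is phrased as two distinct inverse-closures $\Lambda_1\neq\Lambda_2$ having the same image, which is the same idea.
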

\begin{proof}
Suppose $\pi_t$ is injective, and let $P\in\qspec{t}(D)$ and $\Delta:=\qspec{t}(D_P)$. Then, $\Delta$ is compact (being proconstructible in $\Spec(D_P)$), and thus $\Delta\cap D:=\{Q\cap D\mid P\in\Delta\}$ is a compact subspace of $\qspec{t}(D)$, since it is the continuous image of $\Delta$ under the canonical map $\Spec(D_P)\longrightarrow\Spec(D)$. If $PD_P\notin\Delta$, then $P\notin\Delta\cap D$; however,
\begin{equation*}
\pi_t(\Delta\cap D)=\bigcap\{D_{Q\cap D}\mid Q\in\Delta\}=\bigcap\{(D_P)_Q\mid Q\in\Delta\}=D_P,
\end{equation*}
with the last equality coming from the properties of the $t$-spectrum. If we denote by $\Lambda_1$ the closure in the inverse topology of $\qspec{t}(D)$ of $\Delta\cap D$, and by $\Lambda_2$ the closure of $(\Delta\cap D)\cup\{P\}$, we have thus $\pi_t(\Lambda_1)=\pi_t(\Lambda_2)$ while $\Lambda_1\neq\Lambda_2$, against the injectivity of $\pi_t$.

On the other hand, suppose $D$ is well-behaved. Suppose $\pi_t(\Delta)=\pi_t(\Lambda)=:T$ for some $\Delta,\Lambda\in\xcal(\qspec{t}(D))$, $\Delta\neq\Lambda$, and let $P\in\Delta\setminus\Lambda$. By \cite[Lemma 2.4]{dobbs_fedder_fontana}, the subspace $\{D_Q\mid Q\in\Lambda\}\subseteq\Over(D)$ is compact; then,
\begin{equation*}
D_P=D_PT=D_P\bigcap_{Q\in\Lambda}D_Q=\bigcap_{Q\in\Lambda}D_PD_Q,
\end{equation*}
with the last equality coming from \cite[Corollary 5]{compact-intersections}. The family $\{D_PD_Q\mid Q\in\Lambda\}$ is again compact \cite[Lemma 4]{compact-intersections}; thus, $\star:I\mapsto\bigcap_{Q\in\Lambda}ID_PD_Q$ is a finite-type spectral semistar operation such that $D^\star=D_P$, and thus it restricts to a finite-type \emph{star} operation $\star'$ on $D_P$. Since $PD_P$ is $t$-closed, and $\star'$ is of finite type, $(PD_P)^{\star'}$ must be equal to $PD_P$; however,
\begin{equation*}
P^{\star'}=P^\star=\bigcap_{Q\in\Lambda}PD_QD_P=\bigcap_{Q\in\Lambda}D_QD_P=D_P,
\end{equation*}
since $P\nsubseteq Q$ for every $Q\in\Lambda$. This is a contradiction, and $\pi_t$ is injective.
\end{proof}

\begin{oss}\label{oss:wellb}
~\begin{enumerate}
\item There are examples of integral domains that are not well-behaved (see \cite[Section 2]{wellbehaved} or \cite[Example 1.4]{fintcar}), and thus $\pi_t$ is not always injective.
\item\label{oss:wellb:spec} It would be tempting to substitute the space $\xcal(\qspec{t}(D))$ with $\xcal(\Delta)$, where $\Delta$ is the set of well-behaved $t$-prime ideals of $D$. However, $\Delta$ may not be compact and thus, \emph{a fortiori}, may not be a spectral space. For example, consider a domain $D$ and a prime ideal $Q$ that is a maximal $t$-ideal (that is, $P$ is maximal among the ideals $I$ such that $I=I^t$) but not well-behaved. (An explicit example is $E+XE_S[X]$, where $E$ is the ring of entire functions, $X$ is an indeterminate and $S$ is the set of finite products of elements of the form $Z-\alpha$, as $\alpha$ ranges in $\insC$; see \cite[Example 2.6, Section 4.1 and Proposition 4.3]{zaf_gcd}.) Let $\Lambda$ be the set of prime ideals that are associated to some principal ideal; then, $P\in\Lambda$ if and only if $P$ is minimal over the ideal $(bD:_DaD)$, for some $a,b\in D$.

Since a principal ideal is $t$-closed, so is $(bD:_DaD)=\frac{b}{a}D\cap D$; moreover, a minimal prime over a $t$-ideal is again a $t$-ideal, and thus $\Lambda\subseteq\qspec{t}(D)$. Moreover, if $P\in\Lambda$ then $PD_P$ will be associated to a principal ideal of $D_P$ (if $P$ is minimal over $(bD:_DaD)$, then $PD_P$ is minimal over $(bD:_DaD)D_P=(bD_P:_{D_P}aD_P)$). Hence, each prime of $\Lambda$ is well-behaved, and $\Lambda\subseteq\Delta$.

By \cite{associatedprimes-princid}, we have $D=\bigcap\{D_P\mid P\in\Lambda\}$, and thus also $D=\bigcap\{D_P\mid P\in\Delta\}$. If $\Delta$ were compact, it would define a finite-type star operation $\star:I\mapsto\bigcap\{ID_P\mid P\in\Delta\}$ such that $Q^\star=D$. On the other hand, we should have $\star\leq t$ and thus $Q^\star\subseteq Q^t=Q$, a contradiction. Hence, $\Delta$ is not compact.
\end{enumerate}
\end{oss}

Recall that a domain is \emph{$v$-coherent} if, for any ideal $I$, $(D:I)=(D:J)$ for some finitely generated ideal $J$.

\begin{cor}\label{cor:pit-inj-vcoer}
Let $D$ be a $v$-coherent domain. Then, $\pi_t$ is injective.
\end{cor}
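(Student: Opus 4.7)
The plan is to invoke Proposition \ref{prop:pit-inj-car}, which reduces injectivity of $\pi_t$ to showing that every $t$-prime ideal $P$ of the $v$-coherent domain $D$ is well-behaved, i.e., $(PD_P)^t=PD_P$ in $D_P$. So I fix a $t$-prime $P$ and aim to establish this equality. Since $t=v_f$, it suffices to verify $\mathfrak{a}^v\subseteq PD_P$ for every finitely generated ideal $\mathfrak{a}$ of $D_P$ with $\mathfrak{a}\subseteq PD_P$; clearing denominators in a finite generating set, I may arrange $\mathfrak{a}=ID_P$ for some finitely generated ideal $I\subseteq P$ of $D$.

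The crux of the argument will be the inclusion $(ID_P)^v\subseteq I^v D_P$. To produce it, I would apply $v$-coherence to an integral multiple of the fractional ideal $(D:I)$: multiplying by any nonzero element $i\in I$ makes $i(D:I)$ an ideal of $D$, and the defining property yields a finitely generated ideal $J$ with $(D:i(D:I))=(D:J)$, which after rearrangement gives $I^v=L^{-1}$ for the finitely generated fractional ideal $L:=i^{-1}J$. Because $L$ is finitely generated, $L^{-1}$ is a finite intersection of principal fractional ideals, and flatness of $D_P$ over $D$ lets this intersection commute with extension to $D_P$; thus $I^v D_P = L^{-1}D_P = (LD_P)^{-1}$, which is automatically a $v$-ideal of $D_P$. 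Since $I\subseteq I^v$ forces $ID_P\subseteq I^v D_P$, the desired inclusion $(ID_P)^v\subseteq I^v D_P$ follows.

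Finally, because $I$ is finitely generated one has $I^v=I^t$; and since $I\subseteq P=P^t$ with $t$ order-preserving, $I^v\subseteq P$, whence $I^v D_P\subseteq PD_P$. Combining the two inclusions gives $\mathfrak{a}^v=(ID_P)^v\subseteq PD_P$, completing the argument. The main obstacle is really the inclusion $(ID_P)^v\subseteq I^v D_P$: it fails without some coherence hypothesis, and $v$-coherence is the precise ingredient needed to represent $I^v$ as the inverse of a finitely generated ideal, which is what makes it well-behaved under the flat extension $D\subseteq D_P$.
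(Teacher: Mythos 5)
Your argument is correct, and it follows the paper's strategic reduction exactly: invoke Proposition \ref{prop:pit-inj-car} so that injectivity of $\pi_t$ becomes the claim that every $t$-prime of $D$ is well-behaved. Where you diverge is in how you dispose of the remaining step. The paper simply cites the fact, taken from the proof of Proposition 4.6 of \cite{twostar}, that $v$-coherence implies $(ID_Q)^t=I^tD_Q$ for every ideal $I$ and every prime $Q$, and then specializes to $I=P$, $Q=P$. You instead prove, from scratch and just for the finitely generated $I$ that actually occur in computing $(PD_P)^t$, the one-sided inclusion $(ID_P)^v\subseteq I^vD_P$, by using $v$-coherence (applied to the integral ideal $i(D:I)$) to rewrite $I^v$ as $L^{-1}$ for a finitely generated fractional ideal $L$, so that $I^vD_P=(LD_P)^{-1}$ is automatically a $v$-ideal of $D_P$ containing $ID_P$. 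This is a clean, self-contained replacement of the citation, and it isolates precisely where $v$-coherence is used: to make $I^v$ the inverse of something finitely generated, which is what survives the flat extension $D\subseteq D_P$. The trade-off is length versus self-containedness; mathematically the content is the same.
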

\begin{proof}
Since $D$ is $v$-coherent, $(ID_Q)^t=I^tD_Q$ for every ideal $I$ of $D$ \cite[proof of Proposition 4.6]{twostar} and every $Q\in\Spec(D)$; thus, if $P\in\qspec{t}(D)$ then $(PD_P)^t=P^tD_P=PD_P$. By Proposition \ref{prop:pit-inj-car}, $\pi_t$ is injective.
\end{proof}

\section{Flat overrings}\label{sect:flat}
The space $\Overflat(D)$ of flat overrings of $D$ is much more mysterious than $\Overqr(D)$ and $\Oversloc(D)$, and we are not able to characterize when it is spectral or proconstructible. The main theorem of this section is the following partial result.
\begin{prop}\label{prop:flat-cons}
Let $D$ be an integral domain. Then, $\Overflat(D)$ is a proconstructible subspace of $\Over(D)$ if and only if $\Overflat(D)\cap\B(x_1,\ldots,x_n)$ is compact for every $x_1,\ldots,x_n\in K$.
\end{prop}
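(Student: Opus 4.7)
The argument splits naturally into two rather different halves.

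\emph{The ``only if'' direction} is formal. If $\Overflat(D)$ is proconstructible, the constructible topology on $\Overflat(D)$ coincides with the subspace topology inherited from $\Over(D)^\cons$. Since $\B(x_1,\ldots,x_n)$ is compact-open in $\Over(D)$, it is clopen in $\Over(D)^\cons$, and consequently $\B(x_1,\ldots,x_n)\cap\Overflat(D)$ is clopen, hence compact, in $\Overflat(D)^\cons$; being compact in the constructible topology, it is also compact in the coarser Zariski topology.

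\emph{The ``if'' direction} is the substantive one. I appeal to the ultrafilter characterization of proconstructible subsets of a spectral space \cite{finocchiaro-ultrafiltri}: it suffices to show that, for every ultrafilter $\mathcal{U}$ on $\Overflat(D)$, the ultrafilter limit in $\Over(D)^\cons$ lies in $\Overflat(D)$. This limit is the overring
\begin{equation*}
T^* := \{x\in K \mid \B(x)\cap\Overflat(D)\in\mathcal{U}\},
\end{equation*}
and the only thing to verify is that $T^*$ is flat over $D$. I use the classical criterion that an overring $T$ of $D$ is flat if and only if $1\in (D:_D x)T$ for every $x\in T$.

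Fix $x\in T^*$. For each $S\in\B(x)\cap\Overflat(D)$, flatness of $S$ furnishes $d_1,\ldots,d_n\in(D:_D x)$ and $s_1,\ldots,s_n\in S$ with $\sum_i d_i s_i = 1$; in particular $S\in\B(s_1,\ldots,s_n)\cap\Overflat(D)$. Thus the family
\begin{equation*}
\mathcal{C} := \{\B(s_1,\ldots,s_n)\cap\Overflat(D) \mid n\geq 1,\ s_i\in K,\ \sum_i d_i s_i = 1\text{ for some }d_i\in(D:_D x)\}
\end{equation*}
is an open cover of $\B(x)\cap\Overflat(D)$. By hypothesis this subspace is compact, so finitely many opens $\B(s_1^{(j)},\ldots,s_{n_j}^{(j)})\cap\Overflat(D)$ ($j=1,\ldots,m$), carrying witness tuples $(d_1^{(j)},\ldots,d_{n_j}^{(j)})$, already suffice. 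Since $\B(x)\cap\Overflat(D)\in\mathcal{U}$ and $\mathcal{U}$ is an ultrafilter, at least one of these opens lies in $\mathcal{U}$, say for $j=j_0$; by the definition of $T^*$, every $s_i^{(j_0)}\in T^*$, so $1=\sum_i d_i^{(j_0)}s_i^{(j_0)}\in(D:_D x)T^*$, which is exactly the required flatness condition.

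The main hurdle is identifying the cover $\mathcal{C}$: it is precisely the gadget that converts the pointwise algebraic witness of flatness (a single relation $\sum d_i s_i=1$) into the topological datum of membership in a basic open $\B(s_1,\ldots,s_n)$, so that one round of compactness followed by the standard ultrafilter extraction transports the witness from the individual flat overrings $S$ to the ultrafilter limit $T^*$. Once this translation is in place, the remaining steps are routine, and it is worth noting that only the singleton case of the hypothesis (compactness of $\B(x)\cap\Overflat(D)$) is actually used in this direction.
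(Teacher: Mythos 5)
Your proof is correct, and the overall strategy is the same as the paper's: invoke the ultrafilter criterion for proconstructibility, reduce to showing that the ultrafilter limit $T^*$ is flat, encode the relevant algebraic witnesses of flatness as membership in basic open sets, cover the compact set $\B(\cdots)\cap\Overflat(D)$, extract a finite subcover, and use the ultrafilter to select one witness that then transports to $T^*$. The genuine difference is the flatness criterion used to produce the witness: the paper invokes the general equational characterization of flat modules (a relation $\sum a_i x_i=0$ lifts through a matrix $(b_{ij})$ over $D$ to elements $y_k$ of the overring), which is why it works with tuples $x_1,\ldots,x_n\in A_{\mathscr{U}}$ and needs compactness of $\B(x_1,\ldots,x_n)\cap\Overflat(D)$; you instead use the Akiba--Richman criterion specific to overrings, namely that an overring $T$ is $D$-flat iff $1\in(D:_D x)T$ for every $x\in T$. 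This buys you a cleaner cover, and, as you note, means you only need compactness of $\B(x)\cap\Overflat(D)$ for single elements $x$; since the full $n$-tuple version does not obviously follow from the singleton version until one already knows $\Overflat(D)$ is spectral, this is a (mild) strengthening of the stated hypothesis. The trade-off is that the paper's argument via the equational criterion generalizes verbatim from overrings to submodules (which is how the paper treats $\inssubflat_R(M)$ later), whereas the Akiba--Richman criterion is special to overrings.
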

\begin{proof}
If $\Overflat(D)$ is proconstructible, the compactness of $\Overflat(D)\cap\B(x_1,\ldots,x_n)$ follows like in the proof of Proposition \ref{prop:sloc-consspec}.

Suppose that the compactness property holds, and let $x_1,\ldots,x_n\in K$. Consider the canonical subbasis $\mathcal{S}:=\{\B(x)\cap X\mid x\in K\}$ of $X:=\Overflat(D)$. By \cite[Proposition 3.3]{finocchiaro-ultrafiltri} and \cite[Theorem 8]{fontana_patch} (or \cite[Corollary 2.17]{finocchiaro-ultrafiltri}), we need to show that, for every ultrafilter $\mathscr{U}$ on $X$, the ring $A_\mathscr{U}:=\{x\in K\mid\B(x)\cap X\in\mathscr{U}\}$ is flat.

Take $a_1,\ldots,a_n\in D$, $x_1,\ldots,x_n\in A_\mathscr{U}$ such that $a_1x_1+\cdots+a_nx_n=0$. For all $C\in\Overflat(D)\cap\B(x_1,\ldots,x_n)$, by the equational characterization of flatness (see e.g. \cite[Theorem 7.6]{matsumura} or \cite[Corollary 6.5]{eisenbud}) there are $b_{jk}^{(C)}\in D$, $y_k^{(C)}\in C$ such that
\begin{equation}\label{eq:flat-equational}
\begin{cases}
0=a_1b_{1k}^{(C)}+\cdots+a_nb_{nk}^{(C)} & \text{for all~}k\\
x_i=b_{i1}^{(C)}y_1^{(C)}+\cdots+b_{iN}^{(C)}y_N^{(C)} & \text{for all~}i.
\end{cases}
\end{equation}
Let $\Omega(C):=\B(y_1^{(C)},\ldots,y_{n_C}^{(C)})$. Then, the family of the $\Omega(C)$ is an open cover of $\Overflat(D)\cap\B(x_1,\ldots,x_n)$. Hence, there is a finite subcover $\{\Omega(C_1),\ldots,\Omega(C_n)\}$; by the properties of ultrafilters, it follows that $\Omega(C_j)\in\mathscr{U}$ for some $j$. Thus, $y_i^{(C_j)}\in A_\mathscr{U}$ for all $i$; then, \eqref{eq:flat-equational} holds in $A_\mathscr{U}$. Hence, applying again the equational criterion, $A_\mathscr{U}$ is flat.
\end{proof}

\begin{cor}\label{cor:sloc=flat}
Let $D$ be an integral domain such that $\Overflat(D)=\Oversloc(D)$. Then, $\Overflat(D)$ is a proconstructible subset of $\Over(D)$. In particular, $D$ is rad-colon coherent.
\end{cor}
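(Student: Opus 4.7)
The plan is to invoke Proposition~\ref{prop:flat-cons}: to show that $\Overflat(D)$ is proconstructible in $\Over(D)$, it suffices to verify that $\Overflat(D)\cap\B(x_1,\ldots,x_n)$ is compact for every finite tuple $x_1,\ldots,x_n\in K$. Using the hypothesis $\Overflat(D)=\Oversloc(D)$, this intersection can be rewritten as $\Oversloc(D)\cap\B(x_1,\ldots,x_n)$, and the compactness of the latter has essentially already been recorded inside the proof of Proposition~\ref{prop:sloc-consspec}.

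The key reason is that this subspace admits a minimum under inclusion, namely
\begin{equation*}
T_0:=\bigcap\{D_P\mid P\in\Spec(D),\ x_1,\ldots,x_n\in D_P\},
\end{equation*}
which is a sublocalization (an intersection of localizations is a sublocalization, and $T_0$ is nonempty since $D_{(0)}=K$ always belongs to the intersecting family). By definition $T_0$ contains $x_1,\ldots,x_n$, and any sublocalization $T=\bigcap_{P\in\Lambda}D_P$ containing $x_1,\ldots,x_n$ forces each $P\in\Lambda$ into the indexing set of $T_0$, whence $T\supseteq T_0$. Since the Zariski basic open sets $\B(y)$ are upward-closed under inclusion, any open subset of $\Oversloc(D)\cap\B(x_1,\ldots,x_n)$ that contains the minimum $T_0$ is automatically the whole subspace; hence every open cover admits a one-element subcover and compactness is immediate. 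Proposition~\ref{prop:flat-cons} then yields that $\Overflat(D)$ is proconstructible.

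For the final assertion, the rad-colon coherence of $D$ is a direct consequence of Proposition~\ref{prop:intersez-Loccons}, which states precisely that proconstructibility of $\Overflat(D)$ forces $D$ to be rad-colon coherent. I do not anticipate any genuine obstacle: the corollary is a bookkeeping assembly of Proposition~\ref{prop:flat-cons}, the explicit minimum of $\Oversloc(D)\cap\B(x_1,\ldots,x_n)$ noted in Proposition~\ref{prop:sloc-consspec}, and Proposition~\ref{prop:intersez-Loccons}.
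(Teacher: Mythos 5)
Your proof is correct and follows essentially the same route as the paper: combine Proposition~\ref{prop:flat-cons} with the observation (already made in the proof of Proposition~\ref{prop:sloc-consspec}) that $\Oversloc(D)\cap\B(x_1,\ldots,x_n)$ has a minimum and is therefore compact, then invoke Proposition~\ref{prop:intersez-Loccons} for the final assertion. The only difference is that you spell out the minimum-implies-compactness argument, which the paper leaves implicit.
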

\begin{proof}
It is enough to note that $\Oversloc(D)\cap\B(x_1,\ldots,x_n)$ has always a minimum, and apply Proposition \ref{prop:flat-cons}.
\end{proof}

\begin{ex}\label{ex:flatqr}
The space of flat overrings can be spectral even if it is not proconstructible.

Let $K$ be a field, and let $D:=K[[X^2,X^3,XY,Y]]$; that is, $D$ is the set of the power series in two variables over $K$ without the monomial corresponding to $X$. Then, $D$ is a two-dimensional local Noetherian domain; its integral closure is $A:=K[[X,Y]]=D[X]$, which is also equal to the intersection of the localizations at the height-1 primes of $D$. (In particular, $A$ is a local sublocalization of $D$ that is not a localization.) By Corollary \ref{cor:pit-inj-vcoer}, it is easy to see that the sublocalizations of $D$ are $D$ itself and the intersections $T(\Delta):=\bigcap\{D_P\mid P\in\Delta\}$, as $\Delta$ ranges among the subsets of $X^1(D):=\{P\in\Spec(D)\mid P\text{~has height~}1\}$.

A power series $\phi:=\sum_{i,j\geq 0}a_{ij}X^iY^j$ is invertible in $A$ if and only if $a_{00}\neq 0$; hence, if $\phi\in A$ is not invertible then $\phi^2\in D$. Since every height-1 prime ideal of $A$ is principal (being $A$ a unique factorization domain) and the canonical map $\Spec(A)\longrightarrow\Spec(D)$ is surjective, every height-1 prime ideal of $D$ is the radical of a principal ideal (if $P=Q\cap D$, for $Q\in\Spec(A)$, $Q=\phi A$, then $P$ is the radical of $\phi^2D$). Hence, $T(\Delta)$ is a quotient ring of $D$ for every $\Delta\subsetneq X^1(D)$; in particular, they are all flat. Hence, $\Overqr(D)=\Overflat(D)$ is spectral; however, $(D:_DX)$ is equal to the maximal ideal of $D$, which cannot be the radical of a principal ideal since it is of height 2. By Theorem \ref{teor:Overqr}, $\Overqr(D)$ (and so $\Overflat(D)$) is not proconstructible.
\end{ex}

The space $\Overflat(D)$ is, however, amenable to generalizations. Indeed, if $R$ is a ring and $M$ is an $R$-module, then the set $\inssubmod_R(M)$ of $R$-submodules of $M$ can be endowed with a topology (called the \emph{Zariski topology}) whose basic open sets are of the form
\begin{equation*}
\D(x_1,\ldots,x_n):=\{N\in\inssubmod_R(M)\mid x_1,\ldots,x_n\in N\},
\end{equation*}
as $x_1,\ldots,x_n$ vary in $M$. Under this topology, $\inssubmod_R(M)$ is a spectral space \cite[Example 2.2(2)]{olberding_topasp}; moreover, if $D$ is an integral domain with quotient field $K$, then the Zariski topology on $\Over(D)$ is exactly the restriction of the Zariski topology on $\inssubmod_D(K)=\inssubmodqr(D)$, and $\Over(D)$ is proconstructible in $\inssubmodqr(D)$.

We can consider on $\inssubmod_R(M)$ the subspace $\inssubflat_R(M)$ consisting of all flat $R$-submodules of $M$. Surprisingly, in many cases spectrality and proconstructibility of $\inssubflat_R(M)$ are equivalent.
\begin{prop}\label{prop:inssbuflat}
Let $R$ be a ring and $M$ be an $R$-module; suppose that $R$ is an integral domain or that $M$ is torsion-free. Then, $\inssubflat_R(M)$ is a spectral space if and only if it is proconstructible in $\inssubmod_R(M)$.
\end{prop}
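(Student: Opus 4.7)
The plan is to handle the two implications separately. The direction \emph{proconstructible implies spectral} is immediate from the general fact, already used throughout the paper, that a proconstructible subspace of a spectral space is itself spectral; this uses neither of the two hypotheses on $R$ and $M$.

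For the converse, I assume $\inssubflat_R(M)$ is spectral and plan to apply Lemma \ref{lemma:YX-cons} to the inclusion $\inssubflat_R(M)\hookrightarrow\inssubmod_R(M)$, using the subbasis $\mathcal{B}=\{\D(x)\mid x\in M\}$ of $\inssubmod_R(M)$. Each $\D(x)$ is a basic quasi-compact open of $\inssubmod_R(M)$, so the whole argument reduces to showing that $\D(x)\cap\inssubflat_R(M)$ is quasi-compact for every $x\in M$.

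My strategy for this quasi-compactness is exactly the trick used in the proof of Proposition \ref{prop:sloc-consspec}: if a subset of $\inssubmod_R(M)$ has a minimum $N_{0}$ in the containment order, then any basic Zariski open $\D(y_{1},\ldots,y_{m})$ is upward-closed in containment, so whichever member of an open cover contains $N_{0}$ automatically covers every supermodule of $N_{0}$, giving a one-element subcover. It therefore suffices to exhibit, for each $x\in M$, either emptiness or a minimum of $\D(x)\cap\inssubflat_R(M)$. Under the hypothesis that $R$ is an integral domain this is quick: flat $R$-modules are torsion-free, so if $\mathrm{Ann}_{R}(x)\neq 0$ then no flat submodule of $M$ can contain $x$ and the intersection is empty, while if $\mathrm{Ann}_{R}(x)=0$ then $Rx\cong R$ is flat and is the minimum. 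Under the hypothesis that $M$ is torsion-free I expect $Rx$ itself to serve as the minimum, exploiting the torsion-freeness of $M$ to force $R/\mathrm{Ann}_{R}(x)$ to be flat.

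The main obstacle I expect is making the $M$-torsion-free case rigorous: in the weakest reading of torsion-freeness (no non-zerodivisor of $R$ annihilates a nonzero element of $M$), $Rx\cong R/\mathrm{Ann}_{R}(x)$ need not be flat for arbitrary $R$, since flatness of a cyclic quotient $R/I$ forces $I$ to be a pure ideal. The argument for this case will presumably need a finer analysis of how $\mathrm{Ann}_{R}(x)$ interacts with the zerodivisors of $R$ when $M$ is torsion-free — or else the replacement of $Rx$ by a slightly larger canonical minimum inside $\D(x)\cap\inssubflat_R(M)$, perhaps constructed via localization at the multiplicative set of non-zerodivisors. Once quasi-compactness of the subbasic intersections is secured in both cases, Lemma \ref{lemma:YX-cons} closes the argument and delivers proconstructibility.
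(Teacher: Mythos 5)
Your plan coincides with the paper's: the forward implication is the generic fact that a proconstructible subspace of a spectral space is spectral, and for the converse you apply Lemma \ref{lemma:YX-cons} to the subbasis $\{\D(x)\mid x\in M\}$, reducing the problem to compactness of each $\D(x)\cap\inssubflat_R(M)$, which you obtain from a minimum (or emptiness), exactly as in Proposition \ref{prop:sloc-consspec}. Your handling of the integral-domain case is the paper's argument verbatim.

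The worry you raise about the torsion-free case is the right place to worry, and the paper dispels it at the level of terminology rather than by a finer argument: there, ``$M$ torsion-free'' is taken in the strong sense that $\mathrm{Ann}_R(x)=0$ for every nonzero $x\in M$, so $Rx\cong R$ is flat and already serves as the minimum of $\D(x)\cap\inssubflat_R(M)$ (while $\D(0)\cap\inssubflat_R(M)=\inssubflat_R(M)$ is compact by the spectrality hypothesis). Under the weak reading you describe -- no non-zerodivisor kills a nonzero element -- $Rx\cong R/\mathrm{Ann}_R(x)$ indeed need not be flat, as you correctly observe, and the argument stalls with no evident canonical replacement; so commit to the strong reading rather than trying to repair the weak one. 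Note, however, that the strong reading nearly collapses the hypothesis: if $M\neq 0$ is torsion-free in this sense and $a,b\in R$ are nonzero with $ab=0$, then for any nonzero $x\in M$ one has $ax\neq 0$ yet $b(ax)=0$, a contradiction, so $R$ is forced to be a domain anyway.
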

\begin{proof}
Clearly if $\inssubflat_R(M)$ is proconstructible in $\inssubmod_R(M)$ then it is spectral.

Conversely, suppose that $Y:=\inssubflat_R(M)$ is spectral. By Lemma \ref{lemma:YX-cons}, $Y$ is proconstructible if and only if $\Omega\cap Y$ is compact for every $\Omega$ in some subbasis of $\inssubmod_R(M)$; since $\D(x_1,\ldots,x_n)=\D(x_1)\cap\cdots\cap\D(x_n)$ for every $x_1,\ldots,x_n\in M$, we can consider the subbasis $\{\D(x)\cap Y\mid x\in M\}$. By definition, $\D(x)\cap Y:=\{N\in Y\mid x\in Y\}$.

Let $x\in M$. If $x$ has no torsion (so, in particular, if $M$ is torsion-free), then the principal submodule $\langle x\rangle$ is isomorphic to $R$, which is flat; thus, $\D(x)\cap Y$ has a minimum, namely $\langle x\rangle$, and $\D(x)\cap Y$ is compact. On the other hand, if $R$ is an integral domain, then every flat $R$-module is torsion-free \cite[I.2, Proposition 3]{bourbaki_ac}; thus, if $x$ has torsion then no module containing $x$ can be flat, and so $\D(x)\cap Y$ must be empty (and in particular compact).

In all the cases considered, it follows that $\inssubflat_R(M)$ is proconstructible in $\inssubmod_R(M)$.
\end{proof}

\begin{cor}
Let $D$ be an integral domain with quotient field $K$, and suppose that $D$ is not rad-colon coherent. Then, $\inssubflat_D(K)$ is not a spectral space.
\end{cor}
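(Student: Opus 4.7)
The plan is to chain together the two preceding results: Proposition \ref{prop:inssbuflat}, which (since $D$ is a domain) converts spectrality of $\inssubflat_D(K)$ into proconstructibility in $\inssubmod_D(K)$, and Proposition \ref{prop:intersez-Loccons}, which forces rad-colon coherence out of proconstructibility of $\Overflat(D)$.

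Concretely, I would argue by contradiction. Assume $\inssubflat_D(K)$ is spectral. Since $D$ is an integral domain, Proposition \ref{prop:inssbuflat} applies and tells us that $\inssubflat_D(K)$ is proconstructible in $\inssubmod_D(K)=\inssubmodqr(D)$. The paper already records (in the paragraph introducing $\inssubmod_R(M)$) that $\Over(D)$ is itself proconstructible in $\inssubmodqr(D)$, and the Zariski topology on $\Over(D)$ coincides with the subspace topology inherited from $\inssubmodqr(D)$. Consequently the intersection
\begin{equation*}
\Overflat(D)=\inssubflat_D(K)\cap\Over(D)
\end{equation*}
is the intersection of two proconstructible subsets of $\inssubmodqr(D)$, hence proconstructible in $\inssubmodqr(D)$, and therefore also proconstructible in the subspace $\Over(D)$.

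Now Proposition \ref{prop:intersez-Loccons} asserts that whenever $\Overflat(D)$ is proconstructible in $\Over(D)$, the domain $D$ must be rad-colon coherent. This contradicts the hypothesis, and completes the proof.

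There is no real obstacle here: everything is formal, the only small point requiring attention is the compatibility of the two proconstructibility notions (in $\inssubmodqr(D)$ versus in $\Over(D)$), which follows from the general fact that a proconstructible subset of a proconstructible subset of a spectral space is itself proconstructible in the ambient space, together with the matching of constructible topologies on proconstructible subsets. This is already used implicitly elsewhere in the paper and needs no more than a one-line invocation.
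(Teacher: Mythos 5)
Your proposal is correct and follows exactly the same chain as the paper's own proof: apply Proposition \ref{prop:inssbuflat} to convert spectrality into proconstructibility in $\inssubmodqr(D)$, intersect with the proconstructible subset $\Over(D)$ to get $\Overflat(D)$ proconstructible, and then invoke Proposition \ref{prop:intersez-Loccons} to contradict the hypothesis. Nothing to add.
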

\begin{proof}
The space $\Over(D)$ is proconstructible in $\inssubmod_D(K)$ \cite[Example 2.2(5)]{olberding_topasp}, and thus $\Overflat(D)$ is proconstructible in $\Over(D)$ if and only if it is proconstructible in $\inssubmod_D(K)$. If $\inssubflat_D(K)$ were spectral, by Proposition \ref{prop:inssbuflat}, it would follow that it is proconstructible in $\inssubmod_D(K)$; thus, also the intersection $\Over(D)\cap\inssubmod_D(K)=\Overflat(D)$ would be proconstructible in $\inssubmod_D(K)$.

However, if $D$ is not rad-colon coherent then $\Overflat(D)$ is not proconstructible in $\Over(D)$ (Proposition \ref{prop:intersez-Loccons}); hence, $\inssubflat_D(K)$ cannot be spectral.
\end{proof}


\begin{thebibliography}{10}

\bibitem{fintcar}
D.~D. Anderson, Gyu~Whan Chang, and Muhammad Zafrullah.
\newblock Integral domains of finite {$t$}-character.
\newblock {\em J. Algebra}, 396:169--183, 2013.

\bibitem{anderson_two_2000}
D.~D. Anderson and Sylvia~J. Cook.
\newblock Two star-operations and their induced lattices.
\newblock {\em Comm. Algebra}, 28(5):2461--2475, 2000.

\bibitem{bourbaki_ac}
Nicolas Bourbaki.
\newblock {\em Commutative {A}lgebra. {C}hapters 1--7}.
\newblock Elements of Mathematics (Berlin). Springer-Verlag, Berlin, 1989.
\newblock Translated from the French, Reprint of the 1972 edition.

\bibitem{associatedprimes-princid}
J.~W. Brewer and W.~J. Heinzer.
\newblock Associated primes of principal ideals.
\newblock {\em Duke Math. J.}, 41:1--7, 1974.

\bibitem{intD-PvMD}
Paul-Jean Cahen, Alan Loper, and Francesca Tartarone.
\newblock Integer-valued polynomials and {P}r\"ufer {$v$}-multiplication
  domains.
\newblock {\em J. Algebra}, 226(2):765--787, 2000.

\bibitem{EGA4-1}
Jean Dieudonn{\'e} and Alexander Grothendieck.
\newblock \'{E}l\'ements de g\'eom\'etrie alg\'ebrique. {IV}. \'{E}tude locale
  des sch\'emas et des morphismes de sch\'emas. {I}.
\newblock {\em Inst. Hautes \'Etudes Sci. Publ. Math.}, (20):259, 1964.

\bibitem{dobbs_fedder_fontana}
David~E. Dobbs, Richard Fedder, and Marco Fontana.
\newblock Abstract {R}iemann surfaces of integral domains and spectral spaces.
\newblock {\em Ann. Mat. Pura Appl. (4)}, 148:101--115, 1987.

\bibitem{fontana_krr-abRs}
David~E. Dobbs and Marco Fontana.
\newblock Kronecker function rings and abstract {R}iemann surfaces.
\newblock {\em J. Algebra}, 99(1):263--274, 1986.

\bibitem{eisenbud}
David Eisenbud.
\newblock {\em Commutative {A}lgebra}, volume 150 of {\em Graduate Texts in
  Mathematics}.
\newblock Springer-Verlag, New York, 1995.
\newblock With a view toward algebraic geometry.

\bibitem{finocchiaro-ultrafiltri}
Carmelo~A. Finocchiaro.
\newblock Spectral spaces and ultrafilters.
\newblock {\em Comm. Algebra}, 42(4):1496--1508, 2014.

\bibitem{fifolo_transactions}
Carmelo~A. Finocchiaro, Marco Fontana, and K.~Alan Loper.
\newblock The constructible topology on spaces of valuation domains.
\newblock {\em Trans. Amer. Math. Soc.}, 365(12):6199--6216, 2013.

\bibitem{surveygraz}
Carmelo~A. Finocchiaro, Marco Fontana, and Dario Spirito.
\newblock New distinguished classes of spectral spaces: a survey.
\newblock In {\em Multiplicative Ideal Theory and Factorization Theory:
  Commutative and Non-Commutative Perspectives}. Springer Verlag, 2016.

\bibitem{spettrali-eab}
Carmelo~A. Finocchiaro, Marco Fontana, and Dario Spirito.
\newblock Spectral spaces of semistar operations.
\newblock {\em J. Pure Appl. Algebra}, 220(8):2897--2913, 2016.

\bibitem{primi-sgr}
Carmelo~A. Finocchiaro, Marco Fontana, and Dario Spirito.
\newblock Topological properties of semigroup primes of a commutative ring.
\newblock {\em Beitr. Algebra Geom.}, 58(3):453--476, 2017.

\bibitem{Xx}
Carmelo~A. Finocchiaro, Marco Fontana, and Dario Spirito.
\newblock The upper {V}ietoris topology on the space of inverse-closed subsets
  of a spectral space and applications.
\newblock {\em Rocky Mountain J. Math.}, to appear.

\bibitem{topological-cons}
Carmelo~A. Finocchiaro and Dario Spirito.
\newblock Some topological considerations on semistar operations.
\newblock {\em J. Algebra}, 409:199--218, 2014.

\bibitem{compact-intersections}
Carmelo~A. Finocchiaro and Dario Spirito.
\newblock Topology, intersections and flat modules.
\newblock {\em Proc. Amer. Math. Soc.}, 144(10):4125--4133, 2016.

\bibitem{fontana_libro}
Marco Fontana, James~A. Huckaba, and Ira~J. Papick.
\newblock {\em Pr\"ufer {D}omains}, volume 203 of {\em Monographs and Textbooks
  in Pure and Applied Mathematics}.
\newblock Marcel Dekker Inc., New York, 1997.

\bibitem{fontana_patch}
Marco Fontana and K.~Alan Loper.
\newblock The patch topology and the ultrafilter topology on the prime spectrum
  of a commutative ring.
\newblock {\em Comm. Algebra}, 36(8):2917--2922, 2008.

\bibitem{gilmer}
Robert Gilmer.
\newblock {\em Multiplicative {I}deal {T}heory}.
\newblock Marcel Dekker Inc., New York, 1972.
\newblock Pure and Applied Mathematics, No. 12.

\bibitem{gilmer_qr}
Robert Gilmer and Jack Ohm.
\newblock Integral domains with quotient overrings.
\newblock {\em Math. Ann.}, 153:97--103, 1964.

\bibitem{glaz-coherent}
Sarah Glaz.
\newblock {\em Commutative {C}oherent {R}ings}, volume 1371 of {\em Lecture
  Notes in Mathematics}.
\newblock Springer-Verlag, Berlin, 1989.

\bibitem{ohm-essential}
William Heinzer and Jack Ohm.
\newblock An essential ring which is not a {$v$}-multiplication ring.
\newblock {\em Canad. J. Math.}, 25:856--861, 1973.

\bibitem{well-centered}
William Heinzer and Moshe Roitman.
\newblock Well-centered overrings of an integral domain.
\newblock {\em J. Algebra}, 272(2):435--455, 2004.

\bibitem{hochster_spectral}
Melvin Hochster.
\newblock Prime ideal structure in commutative rings.
\newblock {\em Trans. Amer. Math. Soc.}, 142:43--60, 1969.

\bibitem{twostar}
Evan~G. Houston, Abdeslam Mimouni, and Mi~Hee Park.
\newblock Integral domains which admit at most two star operations.
\newblock {\em Comm. Algebra}, 39(5):1907--1921, 2011.

\bibitem{kang_pvmd}
B.~G. Kang.
\newblock Pr\"ufer {$v$}-multiplication domains and the ring {$R[X]_{N_v}$}.
\newblock {\em J. Algebra}, 123(1):151--170, 1989.

\bibitem{matsumura}
Hideyuki Matsumura.
\newblock {\em Commutative {R}ing {T}heory}, volume~8 of {\em Cambridge Studies
  in Advanced Mathematics}.
\newblock Cambridge University Press, Cambridge, 1986.
\newblock Translated from the Japanese by M. Reid.

\bibitem{mimouni-DW}
Abdeslam Mimouni.
\newblock Integral domains in which each ideal is a {$W$}-ideal.
\newblock {\em Comm. Algebra}, 33(5):1345--1355, 2005.

\bibitem{olberding_noetherianspaces}
Bruce Olberding.
\newblock Noetherian spaces of integrally closed rings with an application to
  intersections of valuation rings.
\newblock {\em Comm. Algebra}, 38(9):3318--3332, 2010.

\bibitem{olberding_topasp}
Bruce Olberding.
\newblock Topological aspects of irredundant intersections of ideals and
  valuation rings.
\newblock In {\em Multiplicative Ideal Theory and Factorization Theory:
  Commutative and Non-Commutative Perspectives}. Springer Verlag, 2016.

\bibitem{waj-zak}
Bronislaw Wajnryb and Abraham Zaks.
\newblock On the flat overrings of an integral domain.
\newblock {\em Glasgow Math. J.}, 12:162--165, 1971.

\bibitem{zaf_gcd}
Muhammad Zafrullah.
\newblock The {$D+XD_S[X]$} construction from {GCD}-domains.
\newblock {\em J. Pure Appl. Algebra}, 50(1):93--107, 1988.

\bibitem{wellbehaved}
Muhammad Zafrullah.
\newblock Well behaved prime {$t$}-ideals.
\newblock {\em J. Pure Appl. Algebra}, 65(2):199--207, 1990.

\bibitem{zariski_sing}
Oscar Zariski.
\newblock The reduction of the singularities of an algebraic surface.
\newblock {\em Ann. of Math. (2)}, 40:639--689, 1939.

\bibitem{zariski_comp}
Oscar Zariski.
\newblock The compactness of the {R}iemann manifold of an abstract field of
  algebraic functions.
\newblock {\em Bull. Amer. Math. Soc.}, 50:683--691, 1944.

\end{thebibliography}
\end{document}